\begin{document}
\title[\hfil Identification of the time-dependent coefficient]
{Numerical Approaches for Identifying the Time-Dependent Potential Coefficient in the Diffusion Equation}

\author[Arshyn Altybay]{Arshyn Altybay}
\address{
Arshyn Altybay: 
\endgraf
Institute of Mathematics and Mathematical Modeling,
\endgraf
 125 Pushkin str., 050010 Almaty, Kazakhstan
\endgraf
and
\endgraf
Department of Mathematics: Analysis, Logic and Discrete Mathematics, 
  \endgraf
Ghent University, Krijgslaan 281, Building S8, B 9000 Ghent, Belgium
  \endgraf
{\it E-mail address} {\rm arshyn.altybay@gmail.com, arshyn.altybay@ugent.be}
}

\author[M. Ruzhansky]{Michael Ruzhansky}
\address{
  Michael Ruzhansky:
  \endgraf
  Department of Mathematics: Analysis, Logic and Discrete Mathematics
  \endgraf
  Ghent University, Krijgslaan 281, Building S8, B 9000 Ghent, Belgium
  \endgraf
  and
  \endgraf
  School of Mathematical Sciences
  \endgraf
  Queen Mary University of London, United Kingdom
  \endgraf
  {\it E-mail address} {\rm michael.ruzhansky@ugent.be}
}

\thanks{}
\subjclass[]{35R30, 35B45, 65M06, 65M32} \keywords{inverse problem,  diffusion equation, integral overdetermination condition, Newton-Raphson method, PINN,  numerical analysis}

\begin{abstract}
We address the inverse problem of identifying a time-dependent potential coefficient in a one-dimensional diffusion equation subject to Dirichlet boundary conditions and a nonlocal integral overdetermination constraint reflecting spatially averaged measurements. After establishing well-posedness for the forward problem and deriving an a priori estimate that ensures uniqueness and continuous dependence on the data, we prove existence and uniqueness for the inverse problem. To compute numerically the unknown coefficient, we propose and compare three numerical methods: an integration-based scheme, a Newton–Raphson iterative solver, and a physics-informed neural network (PINN). Numerical experiments on both exact and noisy data demonstrate the accuracy, robustness, and efficiency of each approach.
\end{abstract}

\maketitle
\numberwithin{equation}{section} 
\newtheorem{theorem}{Theorem}[section]
\newtheorem{lemma}[theorem]{Lemma}
\newtheorem{corollary}[theorem]{Corollary}
\newtheorem{remark}[theorem]{Remark}
\newtheorem{definition}[theorem]
{Definition}
\newtheorem{proposition}[theorem]
{Proposition}
\allowdisplaybreaks
\section{Introduction}\label{1}

Let \( \Omega := (0, l) \) be a bounded spatial domain, and let \( T > 0 \) denote a fixed final time. We consider the inverse problem of jointly identifying a time-dependent potential coefficient \( p(t) \) and the corresponding solution \( u(x,t) \) of the following initial-boundary value problem for the one-dimensional diffusion equation:

\begin{equation} \label{1.1}
    u_t(x,t) = u_{xx}(x,t) - p(t)u(x,t) + f(x,t), \quad (x,t) \in Q := \Omega \times (0,T],
\end{equation}
subject to the initial condition
\begin{equation} \label{1.2}
    u(x,0) = \varphi(x), \quad x \in \overline{\Omega},
\end{equation}
and homogeneous Dirichlet boundary conditions
\begin{equation} \label{1.3}
    u(0,t) = u(l,t) = 0, \quad t \in [0,T].
\end{equation}

Here:
\( u(x,t) \) denotes the state variable (e.g. temperature or concentration),
\( f \in C^{2,1}(\overline{Q}) \) is a known source term,
\( \varphi \in C^2(\overline{\Omega}) \) is the known initial condition,
\( p \in L^{\infty}([0,T]) \) is the unknown reaction or potential coefficient.

To identify the unknown function \( p(t) \), we are given an additional nonlocal  integral overdetermination condition of the form
\begin{equation} \label{1.4}
    \int_0^l u(x,t)\,\omega(x)\,dx = g(t), \quad t \in [0,T],
\end{equation}
where \( \omega \in C^2([0,l]) \) is a given non-negative weight function representing spatial sensitivity of measurements, and \( g \in C^1([0,T]) \) is a given measurement function.

\noindent
\textbf{Objective:} Given \( f(x,t) \), \( \varphi(x) \), \( \omega(x) \), and \( g(t) \), determine the pair \( \{p(t), u(x,t)\} \) that satisfies the PDE \eqref{1.1}, initial condition \eqref{1.2}, boundary conditions \eqref{1.3}, and the overdetermination condition \eqref{1.4}.

Such a problem arises in various contexts involving heat transfer, diffusion, or reaction processes in which a source parameter is present. In particular, when $u(x,t)$ denotes the temperature distribution, the problem can be interpreted as a control problem featuring a source term that serves as the control variable~\cite{Kamynin64, Ionkin77, Cannon86}. 

The integral overdetermination condition \eqref{1.4} represents a fundamental class of inverse problems where measurements correspond to spatially integrated quantities. This formulation was described in detail by Prilepko and his co-authors in their books published in 2000~\cite{Prilepko2000}; it models practical scenarios where sensors capture aggregate field values rather than point measurements, such as total pollutant mass in environmental monitoring or integrated heat flux in thermal systems. The weighting function $\omega(x)$ encodes the measurement device's spatial sensitivity, while 
$g(t)$ provides the time-dependent global observation data essential for coefficient identification. Such integral overdetermination conditions have also been extensively investigated in the recent literature~\cite{Grim20,VanBock22,Hendy22,Karel22,Durd24,Suragan25,Durd25,DurdRah25}.   

Several studies have examined the determination of the time-dependent coefficient $p(t)$ in the heat equation~\eqref{1.1} under integral overdetermination constraints $\int_{\Omega}u(x,t)=E(t)$  alongside various forms of nonlocal boundary conditions (see~\cite{Cannon92,Wang92,Dehghan01,Ivanchov01,Fatullayev08,Kerimov12,Kanca13,Hazanee13,Hazanee14,Durdiev22,Suragan24}). Moreover, numerical approaches to recovering $p(t)$ from integral data have also been investigated (see~\cite{Cannon92,Kanca13,Hazanee13,Hazanee14,Durdiev22}).

In the study conducted by Cannon et al.~\cite{Cannon92}, the inverse problem of identifying a time-dependent source parameter in a semilinear parabolic equation is addressed. The authors established the existence, uniqueness, and continuous dependence upon data for a global solution pair $(p,u)$. Numerical procedures accompanied by examples are also presented, employing additional conditions in the form of either integral over-specification or point measurements $u(x_0,t) = E(t)$. Their work provided important theoretical foundations for such inverse problems.

Kanca~\cite{Kanca13} considered an inverse problem of determining a time-dependent coefficient of heat capacity together with temperature distribution in a heat equation. The problem involves periodic boundary and integral overdetermination conditions, and the existence and uniqueness of classical solutions were demonstrated. Numerically, the author utilised a Crank-Nicolson finite-difference scheme combined with an iteration method; however, accuracy analyses were not included.

Hazanee et al.~\cite{Hazanee13} investigated an inverse time-dependent source problem for a heat equation subject to non-classical boundary and integral overdetermination conditions. The authors showed the existence, uniqueness, and continuous dependence of classical solutions by employing the generalised Fourier method. Their numerical solution employed the Boundary Element Method (BEM) combined with second-order Tikhonov regularisation and regularisation parameter selection based on the generalised cross-validation (GCV) criterion.

Hazanee and Lesnic~\cite{Hazanee14} explored the identification of a time-dependent blood perfusion coefficient in a bioheat equation. They considered the inverse problem as a heat source problem with nonlocal boundary and integral energy overdetermination conditions. Numerically, they used the Boundary Element Method with second-order Tikhonov regularization, demonstrating the accuracy and stability of their solutions through benchmark examples.

Durdiev et al.~\cite{Durdiev22} provided a comparative analysis of finite difference and Fourier spectral numerical methods applied to the inverse problem of determining an unknown coefficient in a parabolic equation with standard initial and boundary conditions. They used an additional condition involving $u_x(0,t) = h(t)$ and concluded from their numerical experiments that the Fourier spectral method significantly outperformed finite difference schemes regarding accuracy.

A substantial body of work has examined the inverse problem of identifying space-dependent coefficients in the heat equation~\cite{Isakov91,Rundell87,Ramm01,Yang08,Deng08,Ruzhansky19,Zhang24,Zhang23}. Notably, in~\cite{Zhang23}, the authors employ a deep neural network to address an inverse potential problem for a parabolic equation, where the unknown potential is space-dependent and the measurement data is taken at the final time. In addition, numerous investigations have addressed the determination of coefficients that vary in both time and space~\cite{Deng10,Deng11,Trucu11,Fan21,Mishra21,ARST21c,Purohit25}, among many others.

While the aforementioned studies have made significant contributions to the theoretical understanding of inverse problems in parabolic equations, they exhibit several limitations from a numerical analysis perspective. First, these works primarily focused on either homogeneous integral overdetermination conditions or pointwise conditions as their additional constraints. Second, they did not provide a comprehensive numerical analysis of the complete inverse problem, particularly concerning convergence properties and error estimation.

Modern scientific computing increasingly adopts neural networks to solve PDEs. Physics-informed neural networks (PINNs) approximate solutions by optimising parameters through PDE-based loss functions~\cite{Raissi2019}. PINNs now tackle diverse challenges~\cite{Bararnia22,Mata23}.

This study advances the field by developing a computational framework that overcomes existing limitations through two key innovations. First, we introduce a non-homogeneous integral overdetermination condition as a supplementary constraint, vastly broadening the method’s applicability to real-world scenarios. Second, we design enhanced numerical solvers that integrate the classical integration method, an optimized Newton–Raphson iteration, and the PINN approach. This hybrid strategy delivers three major benefits: rigorous convergence guarantees, improved computational efficiency, and enhanced numerical stability over long simulations. By combining these methodological advances with a solid mathematical formulation, our work not only surmounts previous challenges but also preserves the theoretical foundations of prior research. The complete Python implementation and all supporting datasets are publicly available on GitHub: \url{https://github.com/Arshynbek/inverse-problem_u_t-u_xx-p-t-u-f}

This paper is organised as follows. In Section \ref{2}, we introduce the key inequalities and foundational lemmas used throughout our analysis. Section \ref{3} establishes the well-posedness results for both the direct and inverse problems. In Section \ref{4}, we describe a numerical method for the direct problem and provide its stability and convergence analysis. Section \ref{5} details three computational approaches for solving the inverse problem. Finally, Section \ref{6} presents and discusses the numerical experiments.


\section{Preliminaries}\label{2}

In this section, we recall some fundamental definitions, inequalities, lemmas and theorems used throughout the paper.

\begin{lemma}\label{lemma2}(Poincaré’s Inequality {\cite[Prop.~8.13, p.~218]{Brezis2010}})
\[
\int_0^l |u(x)|^2\,dx \le \frac{l^2}{\pi^2} \int_0^l |u_x(x)|^2\,dx, \quad \forall u \in H^1_0(0,l).
\]
\end{lemma}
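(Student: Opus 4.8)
The plan is to reduce the inequality to a statement about Fourier sine coefficients and then exploit the fact that the smallest Dirichlet eigenvalue of $-d^2/dx^2$ on $(0,l)$ equals $\pi^2/l^2$. First I would recall that the family $e_n(x) := \sqrt{2/l}\,\sin(n\pi x/l)$, $n \ge 1$, is an orthonormal basis of $L^2(0,l)$, so every $u \in L^2(0,l)$ admits the expansion $u = \sum_{n\ge 1} c_n e_n$ with $c_n = \langle u, e_n\rangle$ and, by Parseval's identity, $\int_0^l |u(x)|^2\,dx = \sum_{n\ge 1} |c_n|^2$.

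The key step is to express $\int_0^l |u_x(x)|^2\,dx$ through the same coefficients $c_n$ when $u \in H^1_0(0,l)$. Integrating by parts — the boundary terms vanish because $u$ and each $e_n$ vanish at $x=0$ and $x=l$ — gives $\langle u_x, (e_n)_x\rangle = -\langle u, (e_n)_{xx}\rangle = (n\pi/l)^2 c_n$, since $(e_n)_{xx} = -(n\pi/l)^2 e_n$. From this one obtains $\int_0^l |u_x(x)|^2\,dx = \sum_{n\ge 1} (n\pi/l)^2 |c_n|^2$. I would make this rigorous either by approximating $u$ in the $H^1$-norm by finite linear combinations of the $e_n$ (equivalently, by functions in $C_c^\infty(0,l)$, which are dense in $H^1_0(0,l)$) and passing to the limit, or by invoking the standard characterisation of $H^1_0(0,l)$ as the set of $u \in L^2(0,l)$ whose sine coefficients satisfy $\sum_{n\ge 1}(n\pi/l)^2 |c_n|^2 < \infty$, with this sum equal to $\|u_x\|_{L^2}^2$.

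Granting this identity, the conclusion is immediate: since $n \ge 1$ we have $(\pi/l)^2 \le (n\pi/l)^2$ for every index $n$, hence
\[
\int_0^l |u(x)|^2\,dx = \sum_{n\ge 1} |c_n|^2 \le \frac{l^2}{\pi^2}\sum_{n\ge 1}\left(\frac{n\pi}{l}\right)^2 |c_n|^2 = \frac{l^2}{\pi^2}\int_0^l |u_x(x)|^2\,dx,
\]
with equality precisely when $u$ is a scalar multiple of $\sin(\pi x/l)$. I expect the only genuine obstacle to be the justification of the derivative Parseval identity — that term-by-term differentiation is legitimate and that no energy is lost in the passage $\|u_x\|_{L^2}^2 = \sum (n\pi/l)^2|c_n|^2$ — which is exactly where the density of $C_c^\infty(0,l)$ in $H^1_0(0,l)$, combined with the continuity of both sides of the identity under $H^1$-convergence, does the work.

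An alternative route, avoiding Fourier series, is variational: set $\lambda_1 := \inf\{\,\|u_x\|_{L^2}^2/\|u\|_{L^2}^2 : u \in H^1_0(0,l)\setminus\{0\}\,\}$, use the compact embedding $H^1_0(0,l)\hookrightarrow L^2(0,l)$ to produce a minimiser, derive its Euler–Lagrange equation $-u'' = \lambda_1 u$ with $u(0)=u(l)=0$, and solve this boundary value problem explicitly to find $\lambda_1 = \pi^2/l^2$; the inequality $\|u\|_{L^2}^2 \le \lambda_1^{-1}\|u_x\|_{L^2}^2$ then holds by definition of the infimum. This trades the density argument for an application of the direct method of the calculus of variations.
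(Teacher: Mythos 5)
The paper does not prove this lemma at all---it is quoted directly from Brezis \cite[Prop.~8.13]{Brezis2010}---so there is no in-paper argument to compare against. Your spectral proof is correct and is the standard way to obtain the sharp constant $l^2/\pi^2$: Parseval for $u$ in the sine basis, the identity $\|u_x\|_{L^2}^2=\sum_{n\ge1}(n\pi/l)^2|c_n|^2$, and the trivial bound $1\le n^2$. Two small points of hygiene. First, in the integration by parts $\langle u_x,(e_n)_x\rangle=-\langle u,(e_n)_{xx}\rangle$ the boundary term is $\bigl[u\,(e_n)_x\bigr]_0^l$, and $(e_n)_x$ is a cosine that does \emph{not} vanish at the endpoints; it is only the vanishing of $u$ (i.e.\ the trace-zero property of $H_0^1$) that kills it, so the phrase ``because $u$ and each $e_n$ vanish'' slightly misattributes the reason. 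Second, the step you correctly flag as the only real work---that no energy is lost in $\|u_x\|_{L^2}^2=\sum(n\pi/l)^2|c_n|^2$---is most cleanly closed by observing that $u_x$ has zero mean (since $\int_0^l u_x=u(l)-u(0)=0$) and that $\{\sqrt{2/l}\cos(n\pi x/l)\}_{n\ge1}$ together with the constant is an orthonormal basis of $L^2(0,l)$, so Bessel becomes Parseval for $u_x$ with no contribution from the constant mode; the density of $C_c^\infty(0,l)$ in $H_0^1(0,l)$ that you invoke also works. Your variational alternative via the first Dirichlet eigenvalue is equally valid and is essentially the route taken in Brezis. Either way the proof is complete once that one identity is justified.
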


\begin{lemma}\label{lemma3}(Cauchy–Schwarz Inequality  \cite[p.~624]{Evans2010})
Let \(f\) and \(g\) be measurable functions on a domain \(\Omega\) such that \(f,g\in L^2(\Omega)\).  Then
\[
\left|\int_\Omega f(x)\,g(x)\,\mathrm{d}x\right|
\;\le\;
\Bigl(\int_\Omega |f(x)|^2\,\mathrm{d}x\Bigr)^{1/2}
\;\Bigl(\int_\Omega |g(x)|^2\,\mathrm{d}x\Bigr)^{1/2}.
\]
\end{lemma}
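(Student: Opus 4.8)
The plan is to reduce the inequality to a statement about a single non‑negative quadratic polynomial in one real variable. First I would dispose of the degenerate case: if $\int_\Omega |g(x)|^2\,\mathrm dx = 0$ then $g = 0$ almost everywhere, hence $fg = 0$ almost everywhere, so both sides vanish and the inequality holds trivially (and symmetrically if $\int_\Omega |f(x)|^2\,\mathrm dx = 0$). From now on assume both $L^2$ norms are strictly positive. In particular $fg \in L^1(\Omega)$, by the elementary pointwise bound $|f(x)g(x)| \le \tfrac12\bigl(|f(x)|^2 + |g(x)|^2\bigr)$ (itself just $(|f(x)|-|g(x)|)^2 \ge 0$), so the left‑hand integral is well defined.

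For real‑valued $f,g$, I would introduce, for $\lambda \in \mathbb R$, the function
\[
Q(\lambda) := \int_\Omega \bigl(f(x) + \lambda g(x)\bigr)^2\,\mathrm dx = \lambda^2 \int_\Omega |g(x)|^2\,\mathrm dx + 2\lambda \int_\Omega f(x)g(x)\,\mathrm dx + \int_\Omega |f(x)|^2\,\mathrm dx,
\]
where the expansion is justified by linearity of the integral together with the integrability just noted. Since the integrand is non‑negative, $Q(\lambda) \ge 0$ for every $\lambda$; being a quadratic in $\lambda$ with strictly positive leading coefficient, it therefore has non‑positive discriminant:
\[
4\Bigl(\int_\Omega f(x)g(x)\,\mathrm dx\Bigr)^2 - 4\int_\Omega |g(x)|^2\,\mathrm dx \cdot \int_\Omega |f(x)|^2\,\mathrm dx \le 0,
\]
which rearranges to the claimed estimate after taking non‑negative square roots. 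For complex‑valued $f,g$ the same device works after a preliminary rotation: writing $\int_\Omega f(x)\,\overline{g(x)}\,\mathrm dx = r e^{i\theta}$ with $r \ge 0$, replace $f$ by $e^{-i\theta}f$ so that this integral becomes real and non‑negative, then run the quadratic argument with $Q(\lambda) = \int_\Omega |f(x) + \lambda g(x)|^2\,\mathrm dx$ for real $\lambda$ and finish using $\bigl|\int_\Omega fg\bigr| \le \int_\Omega |fg|$.

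Honestly there is no serious obstacle here; the only points requiring care are (i) checking that $fg$ is integrable so the middle term of $Q(\lambda)$ is meaningful — handled by the Young‑type bound above — and (ii) the degenerate case $\|f\|_{L^2} = 0$ or $\|g\|_{L^2} = 0$, where the discriminant argument breaks down because the leading coefficient vanishes and must be isolated first. An essentially equivalent alternative avoids discriminants entirely: by homogeneity one may assume $\|f\|_{L^2} = \|g\|_{L^2} = 1$, integrate the pointwise inequality $|f(x)g(x)| \le \tfrac12\bigl(|f(x)|^2 + |g(x)|^2\bigr)$ to obtain $\int_\Omega |fg| \le 1 = \|f\|_{L^2}\|g\|_{L^2}$, and then remove the normalization — but this still needs the zero cases treated separately. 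Since the paper only invokes the inequality as a tool, citing \cite[p.~624]{Evans2010} is adequate and the above is merely the standard justification.
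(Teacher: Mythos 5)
Your proof is correct: it is the classical discriminant (or, equivalently, normalization plus Young's inequality) argument, with the degenerate cases and the integrability of $fg$ properly handled. The paper itself gives no proof of this lemma — it is stated as a standard preliminary with a citation to Evans — so your proposal simply supplies the usual textbook justification and there is nothing to compare it against.
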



\begin{theorem}\label{thm:schauder}(Schauder’s Fixed-Point Theorem, \cite{Zeidler86})
Let \( X \) be a Banach space and \( K \subset X \) a nonempty, closed, bounded, and convex subset. Suppose that the operator \( A : K \rightarrow K \) is continuous and compact. Then the operator \( A \) has at least one fixed point, i.e., there exists \( x \in K \) such that
\[
A(x) = x.
\]
\end{theorem}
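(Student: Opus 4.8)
The plan is to reduce the infinite-dimensional assertion to the classical finite-dimensional Brouwer fixed-point theorem by means of the Schauder approximation. Since $A$ is compact, the set $A(K)$ is relatively compact, so for every $n \in \mathbb{N}$ one can cover $\overline{A(K)}$ by finitely many open balls $B(y_1^{(n)}, 1/n), \dots, B(y_{m_n}^{(n)}, 1/n)$ with centers $y_i^{(n)} \in A(K) \subset K$. Set $X_n := \mathrm{span}\{y_1^{(n)}, \dots, y_{m_n}^{(n)}\}$ and $C_n := \mathrm{conv}\{y_1^{(n)}, \dots, y_{m_n}^{(n)}\}$; since $K$ is convex and closed and contains each $y_i^{(n)}$, we have $C_n \subset K$, and $C_n$ is a compact convex subset of the finite-dimensional space $X_n$.

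Next I would introduce the Schauder projection $P_n \colon \overline{A(K)} \to C_n$,
\[
P_n(y) := \frac{\sum_{i=1}^{m_n} \lambda_i(y)\, y_i^{(n)}}{\sum_{i=1}^{m_n} \lambda_i(y)}, \qquad \lambda_i(y) := \max\bigl\{\,0,\ \tfrac1n - \|y - y_i^{(n)}\|\,\bigr\}.
\]
The denominator never vanishes because the balls cover $\overline{A(K)}$; $P_n(y)$ is a convex combination of the $y_i^{(n)}$, hence $P_n(y) \in C_n$; and since $\lambda_i(y) > 0$ forces $\|y - y_i^{(n)}\| < 1/n$, one gets $\|P_n(y) - y\| \le 1/n$ for all $y \in \overline{A(K)}$. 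The composition $A_n := P_n \circ A$ then maps $C_n$ continuously into $C_n$, so Brouwer's fixed-point theorem provides $x_n \in C_n$ with $A_n(x_n) = x_n$; consequently
\[
\|x_n - A(x_n)\| = \|P_n(A(x_n)) - A(x_n)\| \le \frac1n.
\]

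Finally I would pass to the limit. The sequence $\{A(x_n)\}$ lies in the relatively compact set $A(K)$, so some subsequence satisfies $A(x_{n_k}) \to x^{\ast}$ with $x^{\ast} \in \overline{A(K)} \subset K$ (here closedness of $K$ is used). From the displayed estimate, $x_{n_k} = A(x_{n_k}) + \bigl(x_{n_k} - A(x_{n_k})\bigr) \to x^{\ast}$ as well, and continuity of $A$ yields $A(x_{n_k}) \to A(x^{\ast})$. Uniqueness of limits gives $A(x^{\ast}) = x^{\ast}$, which is the desired fixed point.

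The main obstacle is the construction of the Schauder projection and the verification of its three properties — that it is well defined, that it takes values in a compact convex subset of $K$, and that it approximates the identity to within $1/n$ uniformly on $\overline{A(K)}$; granting Brouwer's theorem as a black box, the remaining steps are routine compactness-and-continuity bookkeeping. (Alternatively, one may simply invoke the statement directly from \cite{Zeidler86}, which is how it is used in the sequel.)
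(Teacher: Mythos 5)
Your proof is correct: it is the standard Schauder-projection argument (finite $1/n$-net of $\overline{A(K)}$, projection onto the convex hull of the net, Brouwer on the finite-dimensional piece, then a compactness passage to the limit), and every step — well-definedness and continuity of $P_n$, the inclusion $C_n\subset K$, the estimate $\|x_n-A(x_n)\|\le 1/n$, and the use of closedness of $K$ to conclude $x^\ast\in K$ — is justified. Note that the paper itself does not prove this statement at all; it is listed among the preliminaries and simply cited from \cite{Zeidler86}, so there is no competing argument to compare against, and your writeup is essentially the proof found in that reference.
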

\begin{remark}
An operator \( A : K \rightarrow K \) is called \textit{compact} if it maps bounded subsets of \( K \) into relatively compact subsets of \( K \). 
\end{remark}

\begin{theorem}[Arzelà–Ascoli]
Let \( \mathcal{F} \subset C(\overline{Q}) \), where \( \overline{Q} = \overline{\Omega} \times [0,T] \). If \( \mathcal{F} \) is uniformly bounded and equicontinuous on \( \overline{Q} \), then every sequence in \( \mathcal{F} \) has a uniformly convergent subsequence in \( C(\overline{Q}) \).
\end{theorem}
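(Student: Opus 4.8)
The statement is the classical Arzelà–Ascoli theorem, and the plan is to reproduce the standard proof by diagonal extraction, tailored to the fact that $\overline{Q}=\overline{\Omega}\times[0,T]$ is a compact metric space (hence separable), and that $\bigl(C(\overline{Q}),\|\cdot\|_{\infty}\bigr)$ is a Banach space. Throughout, $d(\cdot,\cdot)$ denotes the Euclidean metric on $\overline{Q}$ and $\mathcal F$ is assumed uniformly bounded by a constant $M_0$ and equicontinuous. The goal is to show that an arbitrary sequence $(f_n)_{n}\subset\mathcal F$ admits a subsequence converging in $C(\overline{Q})$, i.e. converging uniformly on $\overline{Q}$.

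First I would fix a countable dense set $D=\{q_1,q_2,\dots\}\subset\overline{Q}$, which exists since $\overline{Q}$ is a compact metric space. Using uniform boundedness, the real sequence $\bigl(f_n(q_1)\bigr)_n$ lies in $[-M_0,M_0]$, so by Bolzano--Weierstrass there is a subsequence $(f_{1,n})_n$ for which $f_{1,n}(q_1)$ converges; inductively, given $(f_{k,n})_n$, I extract a further subsequence $(f_{k+1,n})_n$ along which $f_{k+1,n}(q_{k+1})$ converges. The diagonal sequence $g_n:=f_{n,n}$ is, for each fixed $k$, eventually a subsequence of $(f_{k,n})_n$, so $\lim_{n\to\infty} g_n(q_k)$ exists for every $k$; thus $(g_n)$ converges pointwise on the dense set $D$, and it is still a subsequence of $(f_n)$.

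The next step is to upgrade pointwise convergence on $D$ to uniform Cauchyness on $\overline{Q}$. Given $\varepsilon>0$, equicontinuity yields $\delta>0$ with $|h(x)-h(y)|<\varepsilon/3$ for every $h\in\mathcal F$ whenever $d(x,y)<\delta$. Compactness of $\overline{Q}$ gives a finite subcover $\overline{Q}\subset\bigcup_{j=1}^{M} B(q_{k_j},\delta)$ with centers $q_{k_j}\in D$. Since each of the finitely many sequences $\bigl(g_n(q_{k_j})\bigr)_n$ converges, there is $N$ such that $|g_n(q_{k_j})-g_m(q_{k_j})|<\varepsilon/3$ for all $n,m\ge N$ and all $j\in\{1,\dots,M\}$. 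For an arbitrary $z\in\overline{Q}$ choose $j$ with $d(z,q_{k_j})<\delta$ and write $|g_n(z)-g_m(z)|\le |g_n(z)-g_n(q_{k_j})|+|g_n(q_{k_j})-g_m(q_{k_j})|+|g_m(q_{k_j})-g_m(z)|<\varepsilon$ for all $n,m\ge N$. Hence $\|g_n-g_m\|_{\infty}\le\varepsilon$ for $n,m\ge N$, so $(g_n)$ is Cauchy in $C(\overline{Q})$. Since $C(\overline{Q})$ is complete, $(g_n)$ converges uniformly to some $g\in C(\overline{Q})$, which is the desired subsequence.

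The only genuinely delicate point is the passage from pointwise convergence on a dense set to uniform convergence: one must combine equicontinuity (to fix the scale $\delta$), compactness (to reduce to finitely many centers at that scale), and the pointwise limits at those centers into a single $3\varepsilon$ estimate that is uniform in $z\in\overline{Q}$. The diagonal extraction itself, and the appeal to completeness of $C(\overline{Q})$, are routine.
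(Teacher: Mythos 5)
Your proof is correct: it is the standard diagonal-extraction argument (separability of the compact set $\overline{Q}$, Bolzano--Weierstrass at countably many points, then the $3\varepsilon$ upgrade to uniform Cauchyness via equicontinuity and a finite subcover, and completeness of $C(\overline{Q})$). The paper states this theorem as a classical preliminary without giving any proof, so there is nothing to compare against; your argument is the canonical one and fills that gap correctly.
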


\begin{definition}[Newton–Raphson Method]
The Newton–Raphson method is an iterative root-finding algorithm for solving nonlinear equations of the form \( F(p) = 0 \). Given an initial guess \( p^{(0)} \), the method updates it according to
\[
p^{(j+1)} = p^{(j)} - \frac{F(p^{(j)})}{F'(p^{(j)})},
\]
where \( F'(p) \) is the derivative (or Jacobian in higher dimensions). In the context of inverse problems, it is used to iteratively adjust unknown parameters so that the numerical solution satisfies overdetermined conditions.
\end{definition}

\begin{definition}[Physics-Informed Neural Network \cite{Raissi2019}]
A Physics-Informed Neural Network (PINN) is a machine learning framework that embeds the structure of a differential equation into the training process of a neural network. Instead of relying solely on data, PINNs minimise a loss functional that includes the residuals of the governing PDE, initial/boundary conditions, and any additional constraints. 
\end{definition}

\section{Existence and Uniqueness Results}\label{3}
In this section, we demonstrate the well-posedness of the direct and inverse problems defined by \eqref{1.1}--\eqref{1.4}. 

We have the following assumptions for the problems \eqref{1.1}--\eqref{1.4}:

\begin{enumerate}
    \item  $p \in L^{\infty}([0,T]) $. 
    \item  $\varphi \in C^2([0,l])$ with $\varphi(0) = \varphi(l) = 0$.
    \item  $f \in C^{2,1}(\overline{Q})$ satisfying $f(0,t) = f(l,t) = 0$ for all $t \in [0,T]$.
    \item  $\omega \in C^2([0,l])$ with $\omega(0) = \omega(l) = 0$, and $g \in C^1([0,T])$ with $g(t) \neq 0$ for all $t \in [0,T]$.
\end{enumerate}

\subsection{Existence and uniqueness of the direct problem}
In this subsection, we present existence and uniqueness results as well as an a priori estimate of the direct problem \eqref{1.1}--\eqref{1.3}. 
\begin{theorem}[Existence and Uniqueness]\label{thm:direct-wellposed}
Let the assumptions (1)-(3) be satisfied. Then the initial-boundary value problem \eqref{1.1}--\eqref{1.3} admits a unique solution $u \in C ([0,T]; H^2(0,l) \cap H_0^1(0,l))$ with $u_t \in L^2(0,T; L^2(0,l))$.
\end{theorem}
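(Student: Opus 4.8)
The plan is to treat the potential term $-p(t)u$ as a lower-order perturbation of the standard heat equation and to obtain the solution via a contraction/energy argument combined with classical linear parabolic theory. Since $p \in L^\infty([0,T])$, the operator $u \mapsto p(t)u$ is bounded on $L^2(0,l)$ uniformly in $t$, so the problem fits the framework of abstract linear parabolic evolution equations with a bounded time-dependent zero-order term added to the sectorial operator $-\partial_{xx}$ with domain $H^2(0,l)\cap H^1_0(0,l)$.

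First I would set up the Galerkin approximation using the Dirichlet eigenfunctions $\{\sin(k\pi x/l)\}_{k\ge 1}$ of $-\partial_{xx}$ on $(0,l)$, projecting \eqref{1.1}--\eqref{1.3} onto the span of the first $m$ of them to get a linear ODE system for the coefficients; assumptions (2)--(3) guarantee $\varphi \in H^2\cap H^1_0$ and $f \in C^{2,1}(\overline Q)$ with compatible boundary values, which is more than enough regularity for the data. Second, I would derive the a priori estimates: testing with $u^{(m)}$ gives, after using the Cauchy--Schwarz inequality (Lemma~\ref{lemma3}) and $\|p\|_{L^\infty}$ to absorb the $p(t)u$ term, a Gronwall bound on $\sup_{[0,T]}\|u^{(m)}(t)\|_{L^2}^2 + \int_0^T \|u_x^{(m)}\|_{L^2}^2\,dt$ in terms of $\|\varphi\|_{L^2}$ and $\|f\|_{L^2(Q)}$; testing with $-u_{xx}^{(m)}$ (equivalently with $\partial_t u^{(m)}$ plus an integration by parts) and again absorbing the zero-order term yields the higher estimate $\sup_{[0,T]}\|u_x^{(m)}(t)\|_{L^2}^2 + \int_0^T \|u_{xx}^{(m)}\|_{L^2}^2\,dt \le C$, and then reading off $u_t^{(m)} = u_{xx}^{(m)} - p u^{(m)} + f$ directly in $L^2(Q)$ gives a uniform bound on $\|u_t^{(m)}\|_{L^2(0,T;L^2)}$. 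Third, I would pass to the limit $m\to\infty$ along a weakly convergent subsequence: weak-$*$ convergence in $L^\infty(0,T;H^2\cap H^1_0)$ and weak convergence of $u_t^{(m)}$ in $L^2(0,T;L^2)$, with the Aubin--Lions lemma giving strong convergence in $C([0,T];L^2)$ or $L^2(0,T;H^1_0)$ sufficient to identify the limit of the (linear) term $p(t)u^{(m)}$; the limit $u$ then solves \eqref{1.1}--\eqref{1.3} and, since $u\in L^\infty(0,T;H^2)$ with $u_t\in L^2(0,T;L^2)$, a standard interpolation/continuity argument upgrades the time regularity to $u\in C([0,T];H^2\cap H^1_0)$.

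For uniqueness, I would take the difference $w = u_1 - u_2$ of two solutions with the same data, which solves the homogeneous equation $w_t = w_{xx} - p(t)w$ with $w(\cdot,0)=0$; testing with $w$ and using $|\int_0^l p(t)w^2\,dx| \le \|p\|_{L^\infty}\|w(t)\|_{L^2}^2$ gives $\frac{d}{dt}\|w(t)\|_{L^2}^2 \le 2\|p\|_{L^\infty}\|w(t)\|_{L^2}^2$, and Gronwall forces $w\equiv 0$. The main obstacle — really the only non-routine point — is the justification of the higher-order energy estimate and the limit passage at the level of $H^2$ regularity: one has to be careful that testing with $u_{xx}^{(m)}$ is legitimate in the Galerkin scheme (it is, since $u_{xx}^{(m)}$ stays in the finite-dimensional eigenspace), that the boundary terms from integration by parts vanish thanks to the homogeneous Dirichlet conditions on $u$, $\varphi$, and $f$, and that the compactness used to pass to the limit is strong enough in the right topology to handle the product $p(t)u^{(m)}$ despite $p$ being merely $L^\infty$ in time — here the linearity in $u$ saves us, since weak convergence of $u^{(m)}$ against the fixed multiplier $p$ suffices and no strong convergence in the $u$-variable is actually needed for that term.
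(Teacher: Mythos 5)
Your proof is essentially correct, but it takes a genuinely different route from the paper's. The paper constructs the solution \emph{explicitly}: it expands $u$, $\varphi$, $f$ in the Dirichlet eigenbasis, solves each coefficient ODE in closed form with the integrating factor $e^{-\lambda_n t-\Lambda(t)}$, $\Lambda(t)=\int_0^t p(s)\,ds$, and then uses the decay $|\varphi_n|,\sup_t|f_n(t)|=O(n^{-2})$ together with $\lambda_n\asymp n^2$ to show that the series for $u$, $u_x$, $u_{xx}$ converge uniformly on $[0,l]\times[\delta,T]$ for each $\delta>0$; $u_t$ is then read off from the equation. You instead run the standard Galerkin machinery: energy estimates at the $L^2$ and $H^2$ levels, a uniform bound on $u_t^{(m)}$ from the equation, weak/weak-$*$ compactness plus Aubin--Lions, and identification of the limit using the linearity of the term $p(t)u$. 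Both are valid; the paper's approach buys an explicit representation formula \eqref{full} and very concrete control of the spectral tail, while yours is more robust (it would survive variable or merely measurable-in-space coefficients and higher dimensions) and sidesteps the delicate question of uniform convergence of the twice-differentiated series near $t=0$. One soft spot you share with the paper: the final upgrade to $u\in C([0,T];H^2)$ is asserted rather than proved. From $u\in L^\infty(0,T;H^2)$ and $u_t\in L^2(0,T;L^2)$ one only gets weak continuity into $H^2$ (and strong continuity into $H^s$, $s<2$, by interpolation); the paper likewise only obtains uniform convergence of the $u_{xx}$-series on $[\delta,T]$ and $u(\cdot,0)=\varphi$ in $L^2$. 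If you want genuine strong $H^2$-continuity up to $t=0$ you need an additional argument (e.g., continuity of $t\mapsto\|u_{xx}(t)\|_{L^2}$ via the equation, or dominated convergence on the spectral representation using $\varphi\in H^2$), so flag this step rather than calling it standard. Your uniqueness argument is the same energy--Gronwall computation as the paper's, minus an inessential use of Poincar\'e's inequality.
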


\begin{proof}
\textbf{Existence:} The solution is constructed via Fourier series. Consider the eigenfunctions $\phi_n(x) = \sqrt{\frac{2}{l}} \sin\left(\frac{n\pi x}{l}\right)$ and eigenvalues $\lambda_n = \left(\frac{n\pi}{l}\right)^2$ for $n \in \mathbb{N}$. Expand:
\[
u(x,t) = \sum_{n=1}^{\infty} u_n(t) \phi_n(x), \quad
\varphi(x) = \sum_{n=1}^{\infty} \varphi_n \phi_n(x), \quad
f(x,t) = \sum_{n=1}^{\infty} f_n(t) \phi_n(x),
\]
where $\varphi_n = \langle \varphi, \phi_n \rangle$, $f_n(t) = \langle f(\cdot,t), \phi_n \rangle$. Substitution into the PDE yields:
\[
u_n'(t) + (\lambda_n + p(t))u_n(t) = f_n(t), \quad u_n(0) = \varphi_n.
\]
Solving this ODE gives:
\begin{equation}\label{sol_coeff}
u_n(t) = \varphi_n e^{-\lambda_nt-\Lambda(t)} + \int_{0}^{t} e^{-\lambda_n(t-s)-(\Lambda(t)-\Lambda(s))}\, f_n(s)\, ds, \quad  \Lambda(t) =  \int_0^t p(s)ds.
\end{equation}

Thus, the full solution is:
\begin{equation}\label{full}
u(t,x) =\sum_{n=1
}^{\infty}\left[ \varphi_n e^{-\lambda_nt-\Lambda(t)} + \int_{0}^{t} e^{-\lambda_n(t-s)-(\Lambda(t)-\Lambda(s))}\, f_n(s)\, ds\right]\phi_n(x).
\end{equation}

Since $p\in L^\infty(0,T)$, let $P:=\|p\|_{L^\infty(0,T)}$. Then, for $0\le s\le t\le T$,
\[
|\Lambda(t)| = \left|\int_0^t p(s)ds \right| \le Pt,\qquad |\Lambda(t)-\Lambda(s)|\le P(t-s),
\]
and hence
\[
\big|e^{-\Lambda(t)}\big|\le e^{Pt},\qquad \big|e^{-(\Lambda(t)-\Lambda(s))}\big|\le e^{P(t-s)}.
\]

The smoothness and boundary vanishing of $\varphi$ and $f(\cdot,t)$ imply the Fourier–sine
coefficient decay
\[
|\varphi_n|=O(n^{-2}),\qquad \sup_{t\in[0,T]}|f_n(t)|=O(n^{-2}).
\]
Consequently, with $\lambda_n\asymp n^2$, there exist $C, C>0$ such that, for all $t\in[0,T]$,
\[
|u_n(t)|\;\le\; C\!\left(\frac{e^{-c n^2 t}}{n^{2}}+\frac{1}{n^{4}}\right),
\qquad
\lambda_n|u_n(t)|\;\le\; C\!\left(e^{-c n^2 t}+\frac{1}{n^{2}}\right).
\]
In particular, for every $\delta\in(0,T]$,
\[
\sup_{t\in[\delta,T]}|u_n(t)|=O(n^{-4}),\qquad
\sup_{t\in[\delta,T]}\lambda_n|u_n(t)|=O(n^{-2}).
\]

Therefore, for each $\delta>0$, the Fourier series defining $u$, $u_x$, and $u_{xx}$
converge \emph{uniformly} on $[0,l]\times[\delta,T]$.
At $t=0$ one has $u(\cdot,0)=\varphi$ in $L^2(0,l)$.
Moreover,
\[
u_t=u_{xx}-p(t)u+f\in L^2\!\big(0,T;L^2(0,l)\big).
\]

\textbf{Uniqueness:} 
Suppose $u_1$ and $u_2$ are two solutions satisfying \eqref{1.1}--\eqref{1.3}. Define $w(x,t) = u_1(x,t) - u_2(x,t)$. Then $w$ satisfies:
\begin{align*}
    w_t &= w_{xx} - p(t) w, \quad (x,t) \in Q, \\
    w(x,0) &= 0, \quad x \in [0,l], \\
    w(0,t) &= w(l,t) = 0, \quad t \in [0,T].
\end{align*}

Consider the energy functional:
\[
E(t) = \frac{1}{2} \int_0^l |w|^2  dx.
\]
Differentiate with respect to $t$ and substitute the PDE:
\begin{align*}
\frac{d}{dt}E(t) &= Re\int_0^l \bar{w}w_t dx \\
&= Re\int_0^l \bar{w} [w_{xx} - p(t) w]  dx \\
&= Re\int_0^l \bar{w} w_{xx}  dx - p(t) \int_0^l |w|^2  dx.
\end{align*}
Integration by parts (using boundary conditions $w(0,t) = w(l,t) = 0$) yields:
\[
\int_0^l \bar{w} w_{xx}  dx = \left[ \bar{w} w_x \right]_0^l - \int_0^l \bar{w_x} w_x   dx = - \int_0^l |w_x|^2 dx.
\]
So,
\[
\frac{d}{dt}E(t) = - \int_0^l |w_x|^2 dx - p(t) \int_0^l |w|^2 dx
\]
Since $p \in L^{\infty}([0,T])$ there exists $P>0$ such that $|p(t)| \leq P\ \text{for almost every  $t \in [0,T]$,}$ Then,
\[
- p(t) \int_0^l |w|^2 dx \leq P \int_0^l |w|^2 dx
\]
Hence,
\[
\frac{d}{dt}E(t) \leq - \int_0^l |w_x|^2 dx + P \int_0^l |w|^2 dx.
\]
Apply Poincaré's inequality for $H_0^1(0,l)$:
\[
\int_0^l |w|^2 dx  \leq \frac{l^2}{\pi^2} \int_0^l | w_x |^2 dx \implies \int_0^l | w_x |^2 dx \geq \frac{\pi^2}{l^2} \int_0^l|w|^2 dx.
\]
Substitute into the inequality:
\[
\frac{d}{dt}E(t) \leq - \frac{\pi^2}{l^2}\int_0^l |w|^2 dx + P \int_0^l |w|^2 dx = (P-\frac{\pi^2}{l^2})\int_0^l |w|^2 dx.
\]
Since $\int_0^l |w|^2 dx = 2E(t)$, we get:
\[
\frac{d}{dt}E(t) \leq 2(P-\frac{\pi^2}{l^2})E(t).
\]
Let $M=2(P-\frac{\pi^2}{l^2})$. Then
\[
\frac{d}{dt}E(t) \leq ME(t).
\]
Since $ E(0) = 0$ and $E(t) \geq 0$, Gronwall’s inequality implies:
\[
E(t) \leq 0, \text{ for all }  t \in [0,T].
\]
Therefore, $E(t)=0$ for all t, so $w(x,t)=0$ almost everywhere. Hence, 
\[
u_1(x,t) = u_2(x,t),
\]
which completes the proof.
\end{proof}

\subsection{A priori estimate for the direct problem}
\begin{theorem}[A priori estimate]\label{thm:apriori}
Let $p \in L^{\infty}([0,T])$, $\varphi \in L^2(0,l)$, 
and $f \in L^2(0,T;L^2(0,l))$.  Then the solution \(u\) of \eqref{1.1}--\eqref{1.3} satisfies:
\begin{equation}\label{eq:apri}
  \| u(\cdot,t) \|_{L^2}^2 \leq e^{(1+2P)t}\|\varphi(\cdot)\|_{L^2}^2 + \int_0^t e^{(1+2P)(t-s)}\| f(\cdot,s) \|_{L^2}^2 ds.
\end{equation}
where $P=\|Re\,p\|_{L^{\infty}}$.
\end{theorem}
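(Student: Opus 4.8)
The plan is to use the standard $L^2$ energy method combined with Gronwall's inequality in integral form. First I would multiply equation \eqref{1.1} by $\overline{u(x,t)}$, integrate over $\Omega=(0,l)$, and take the real part. Integration by parts in the diffusion term, using the homogeneous Dirichlet conditions \eqref{1.3}, turns $\mathrm{Re}\int_0^l \overline{u}\,u_{xx}\,dx$ into $-\|u_x(\cdot,t)\|_{L^2}^2\le 0$, which is simply discarded. The potential term contributes $-\mathrm{Re}\bigl(p(t)\bigr)\int_0^l |u|^2\,dx$, which is bounded above by $P\,\|u(\cdot,t)\|_{L^2}^2$ with $P=\|\mathrm{Re}\,p\|_{L^\infty}$. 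For the source term I would apply the Cauchy–Schwarz inequality (Lemma \ref{lemma3}) followed by Young's inequality $ab\le\tfrac12 a^2+\tfrac12 b^2$, giving $\mathrm{Re}\int_0^l \overline{u}\,f\,dx\le \tfrac12\|u(\cdot,t)\|_{L^2}^2+\tfrac12\|f(\cdot,t)\|_{L^2}^2$. Collecting the terms (and multiplying by $2$ to absorb the factor $\tfrac12$ in front of $\frac{d}{dt}\|u\|_{L^2}^2$) yields the differential inequality
\[
\frac{d}{dt}\|u(\cdot,t)\|_{L^2}^2 \;\le\; (1+2P)\,\|u(\cdot,t)\|_{L^2}^2 \;+\; \|f(\cdot,t)\|_{L^2}^2 .
\]

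Next I would invoke the integral form of Gronwall's inequality applied to $\eta(t):=\|u(\cdot,t)\|_{L^2}^2$, with $\eta(0)=\|\varphi\|_{L^2}^2$ by the initial condition \eqref{1.2} and forcing term $\|f(\cdot,t)\|_{L^2}^2$. This produces precisely the claimed bound \eqref{eq:apri}.

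The main obstacle is justifying the energy identity at the low regularity stated in the theorem: here $\varphi$ lies only in $L^2(0,l)$ and $f$ only in $L^2(0,T;L^2(0,l))$, so $u$ is merely a finite-energy (weak) solution, and the pointwise-in-$t$ computation $\frac{d}{dt}\|u\|_{L^2}^2=2\,\mathrm{Re}\langle u_t,u\rangle$ is not immediately licit. I would circumvent this by working with the Fourier/Galerkin truncations $u^N(x,t)=\sum_{n=1}^N u_n(t)\,\phi_n(x)$, whose coefficients solve the scalar ODEs of Theorem \ref{thm:direct-wellposed}; for these smooth approximations the energy identity is elementary, the differential inequality above holds with constants independent of $N$, and Gronwall gives the bound for $u^N$ uniformly in $N$. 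Passing to the limit $N\to\infty$ (using $u^N(\cdot,t)\to u(\cdot,t)$ in $L^2$ and $\|f^N(\cdot,s)\|_{L^2}\le\|f(\cdot,s)\|_{L^2}$) then transfers the estimate to $u$. Equivalently — and perhaps more transparently — one can argue directly from the representation \eqref{sol_coeff}: multiply the ODE $u_n'+(\lambda_n+p(t))u_n=f_n(t)$ by $\overline{u_n(t)}$, take real parts, drop the nonnegative term $\lambda_n|u_n|^2$, sum over $n$ via Parseval's identity, and apply Gronwall, which yields the same conclusion without ever differentiating the full solution $u$.
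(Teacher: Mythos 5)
Your proposal is correct and follows essentially the same route as the paper: multiply by $\bar{u}$, integrate by parts, drop the gradient term, bound the potential and source terms via $P=\|\mathrm{Re}\,p\|_{L^\infty}$ and Cauchy--Schwarz/Young, and close with Gr\"onwall. Your additional Galerkin/Fourier-truncation justification of the energy identity at the stated low regularity is a refinement the paper omits, but it does not change the argument.
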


\begin{proof}
Multiply \eqref{1.1} by $\bar{u}(x,t)$ and integrate over $(0,l)$:
\[
\int_0^l u_t \, \bar{u} \,\, dx = \int_0^l u_{xx} \,\bar{u} \,\, dx - p(t) \int_0^l |u|^2 \,\, dx + \int_0^l f \,\,\bar{u} \,\, dx.
\]
Integration by parts and boundary conditions yield:
\[
\int_0^l u_{xx} \,\,\bar{u} \, dx = - \| u_x\|_{L^2}^2, \quad \int_0^l u_t \,\ \bar{u} \, dx = \frac{1}{2} \frac{d}{dt} \| u\|_{L^2}^2.
\]
Thus:
\begin{equation} \label{Theo1_eq1}
\frac{1}{2} \frac{d}{dt} \| u \|_{L^2}^2 + \| u_x \|_{L^2}^2 + Re\left(p(t) \| u \|_{L^2}^2 \right) = Re\left(\int_0^l f \,\, \bar{u} \,\, dx\right).
\end{equation}
Using Cauchy-Schwarz and Young's inequalities:
\[
\left| \int_0^l f \,\, \bar{u}  \,\, dx \right| \leq \| f \|_{L^2} \| u \|_{L^2} \leq \frac{1}{2}\| u \|_{L^2}^2 + \frac{1}{2} \| f \|_{L^2}^2.
\]
Since $p \in L^{\infty}([0,T])$, Let $P = \|Re\,p\|_{L^{\infty}}$. Then:
\[
|Re\left(p(t)\right)| \geq -P \implies  Re\left(p(t)\right)\|u\|^2_{L^2} \geq -P\|u\|^2_{L^2}.
\]
Substituting into \eqref{Theo1_eq1} and multiply by 2 we can get
\begin{equation} \label{Theo1_eq2}
 \frac{d}{dt} \| u \|_{L^2}^2 + 2\| u_x \|_{L^2}^2 - 2P\|u\|^2_{L^2} \leq  \| u \|_{L^2}^2 + \| f \|_{L^2}^2.
\end{equation}
Dropping the nonnegative gradient term and changing variable \( t\) by \( s\)  in inequality \eqref{Theo1_eq2} and integrating it over \( s\) from \( 0\) to \(t\) gives: 
\[
  \| u(t) \|_{L^2}^2 \leq \|\varphi\|_{L^2}^2 + (1+2P)\int_0^t \| u(s) \|_{L^2}^2 ds + \int_0^t \| f(s) \|_{L^2}^2 ds.
\]
Applying Grönwall's lemma, we obtain our estimate \eqref{eq:apri}
\end{proof}

\subsection{Existence and uniqueness of the solution of the inverse problem}
\begin{theorem}[Existence for Inverse Problem]\label{thm:inverse}
Under assumptions (1)--(4), the inverse problem \eqref{1.1}--\eqref{1.4} admits at least one solution pair $\{p(t), u(x,t)\}$ with
\[
p \in C([0,T]) \cap L^{\infty}([0,T]), \quad u \in C([0,T];H^2(0,l) \cap H_0^1(0,l)) \text{ and } u_t \in L^2(0,T;L^2(0,l)).
\]
\end{theorem}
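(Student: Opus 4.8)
\emph{Strategy.} I would eliminate the state $u$, reduce the inverse problem to a scalar fixed-point equation for $p$, and apply Schauder's theorem (Theorem~\ref{thm:schauder}). Suppose $\{p,u\}$ solves \eqref{1.1}--\eqref{1.4}. Differentiating \eqref{1.4} in $t$, substituting \eqref{1.1}, and integrating by parts twice --- using $u(0,t)=u(l,t)=0$ from \eqref{1.3} and $\omega(0)=\omega(l)=0$ from assumption~(4), which gives $\int_0^l u_{xx}\,\omega\,dx=\int_0^l u\,\omega''\,dx$ --- one obtains
\[
p(t)\,g(t)=\int_0^l u(x,t)\,\omega''(x)\,dx+\int_0^l f(x,t)\,\omega(x)\,dx-g'(t),
\]
and since $g(t)\neq0$ this determines $p$. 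Writing $u[p]\in C([0,T];H^2\cap H_0^1)$ for the solution of the direct problem \eqref{1.1}--\eqref{1.3} associated with a given $p$ (Theorem~\ref{thm:direct-wellposed}), define
\[
(Ap)(t):=\frac{1}{g(t)}\!\left(\int_0^l u[p](x,t)\,\omega''(x)\,dx+\int_0^l f(x,t)\,\omega(x)\,dx-g'(t)\right).
\]
Conversely, if $Ap=p$, reversing the computation gives $\frac{d}{dt}\big(\int_0^l u[p]\,\omega\,dx-g\big)\equiv0$, so under the natural compatibility $\int_0^l\varphi\,\omega\,dx=g(0)$ one recovers \eqref{1.4}; thus any fixed point of $A$ yields a solution pair.

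\emph{Checking Schauder's hypotheses.} The operator $A\colon C([0,T])\to C([0,T])$ is well defined because $t\mapsto u[p](\cdot,t)$ is continuous into $L^2(0,l)$, $\omega''\in L^2(0,l)$, $f\in C^{2,1}(\overline{Q})$, and $g\in C^1$ is bounded away from zero. I would apply Schauder on the ball $K=\{p\in C([0,T]):\|p\|_{\infty}\le R\}$, which is nonempty, closed, bounded and convex. For the self-mapping property, the a priori estimate \eqref{eq:apri} of Theorem~\ref{thm:apriori} bounds $\sup_t\|u[p](\cdot,t)\|_{L^2}$ by a constant depending only on $R,T,\|\varphi\|_{L^2},\|f\|_{L^2(Q)}$, and combining this with the Cauchy--Schwarz inequality (Lemma~\ref{lemma3}) and $|g|\ge g_{\min}>0$ gives $\|Ap\|_{\infty}\le\Phi(R)$ for an explicit $\Phi$; one then fixes $R$ with $\Phi(R)\le R$. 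For continuity, if $p_k\to p$ uniformly then $w_k:=u[p_k]-u[p]$ solves the direct problem with zero data and forcing $(p-p_k)u[p_k]$, so \eqref{eq:apri} yields $\sup_t\|w_k(\cdot,t)\|_{L^2}\to0$, hence $Ap_k\to Ap$. For compactness, a higher-order energy estimate (as in the proof of Theorem~\ref{thm:direct-wellposed}, using $\varphi\in C^2\subset H^1$) bounds $\|\partial_t u[p]\|_{L^2(Q)}$ uniformly for $p\in K$, so $t\mapsto\int_0^l u[p]\,\omega''\,dx$ is uniformly H\"older-$\tfrac12$; with the smoothness of $f$ and $g$, $A(K)$ is equicontinuous and bounded, hence relatively compact in $C([0,T])$ by Arzel\`a--Ascoli.

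\emph{Conclusion.} Schauder's theorem then gives $p\in K$ with $Ap=p$; since $p\in C([0,T])\subset L^{\infty}([0,T])$ and $u=u[p]$ inherits the regularity of Theorem~\ref{thm:direct-wellposed}, the reduction step shows $\{p,u\}$ solves \eqref{1.1}--\eqref{1.4}.

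\emph{Main obstacle.} I expect the self-mapping condition $\Phi(R)\le R$ to be the hard part: the a priori bound \eqref{eq:apri} grows like $e^{(1+2R)T/2}$ in $R=\|p\|_{\infty}$, whereas $\Phi$ must be linearly dominated by $R$, so closing the inequality requires carefully tracking constants --- the decisive inputs being $g_{\min}>0$ and the $C^2$, $C^{2,1}$ regularity of $\omega,\varphi,f$ --- and may need to be organized as a short-time existence argument followed by a continuation/bootstrap argument to reach an arbitrary horizon $T$. Obtaining the equicontinuity in the compactness step with constants uniform over $p\in K$ is the secondary technical point.
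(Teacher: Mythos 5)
Your proposal follows essentially the same route as the paper: the identical reduction of the overdetermination condition to the operator $(\mathcal{A}\hat p)(t)=\bigl(\int_0^l f\omega\,dx+\int_0^l u_{\hat p}\,\omega''\,dx-g'(t)\bigr)/g(t)$, followed by verification of continuity, compactness via the a priori estimate and Arzel\`a--Ascoli, and invariance of a ball in $C([0,T])$ so that Schauder's theorem applies. The two obstacles you flag --- the need for the compatibility condition $\int_0^l\varphi\,\omega\,dx=g(0)$ to pass from $\mathcal{A}p=p$ back to \eqref{1.4}, and the tension between the exponential-in-$R$ growth of the a priori bound and the self-mapping requirement $\Phi(R)\le R$ --- are genuine and are in fact left implicit in the paper's own invariance step, so your more cautious treatment is warranted rather than a deviation.
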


\begin{proof}
For any $\hat{p} \in L^{\infty}([0,T])$, Theorem~\ref{thm:direct-wellposed} guarantees a unique solution $u_{\hat{p}} \in  C([0,T];H^2 \cap H_0^1) \text{ and } (u_{\hat{p}})_t \in L^2(0,T;L^2)$ to \eqref{1.1}--\eqref{1.3}.
We employ Schauder's fixed-point theorem (Theorem~\ref{thm:schauder}).
Define the operator $\mathcal{A}: L^{\infty}([0,T]) \to C([0,T])$:
\[
(\mathcal{A}\hat{p})(t) := \frac{ \int_0^l f(x,t)\omega(x)  dx + \int_0^l u_{\hat{p}}(x,t)\omega''(x)  dx - g'(t) }{ g(t) }.
\]
Boundary terms ([$u_x\omega]_0^l$  and $-[u\omega']_0^l$) vanish due to $\omega(0) = \omega(l) = 0$ and $u(0,t)=u(l,t)=0$.

\textbf{Continuity:} Let $\hat{p}_n \to \hat{p}$ in $L^{\infty}([0,T])$(in particular in C). By continuous dependence of solutions (Theorem~\ref{thm:direct-wellposed}), $u_{\hat{p}_n} \to u_{\hat{p}}$ in $C([0,T];L^2)$. Uniform convergence of all terms implies $\mathcal{A}\hat{p}_n \to \mathcal{A}\hat{p}$.

\textbf{Compactness:} Fix $P > 0$ and let $B_R = \left\{\hat{p} \in L^{\infty} : \|\hat{p}\|_{\infty} \leq P \right\}$. By Theorem \ref{thm:apriori}, there exists $C_0$ depending only on $\varphi$, $f$, $l$, and $T$ (but independent of $\hat{p} \in B_R$) such that
\[
\|u_{\hat{p}}(\cdot,t)\|_{L^2} \leq C_0, \text{ for all } t \in [0,T], \quad \|(u_{\hat{p}})_t\|_{L^2(0,T;L^2)} \leq C_1. 
\]
Therefore, for all $\hat{p} \in B_R$,
\[
\|\mathcal{A}\hat{p}(t)\|_{C([0,T])} \leq \frac{1}{\min_{t \in [0,T]} |g(t)|} \left( \|g'\|_\infty + \sup_{t \in [0,T]} \left|\int_0^l f\omega  dx\right| + \|\omega''\|_{L^2} \cdot C_0 \right)  : = M,
\]
and the family $\left \{\mathcal{A}\hat{p}:  \hat{p} \in B_R \right \}$ is equicontinuous and uniformly bounded in $C([0,T])$. So $\mathcal{A}(B_R)$ is relatively compact by Arzelà-Ascoli.

\textbf{Invariance:} Define $K = \{ p \in C([0,T]) : \|p\|_\infty \leq P \}$.
Then $K$ is nonempty, closed, bounded, convex in the Banach space 
$C([0,T])$, and by the bound above, $\mathcal{A}(K) \subseteq K$.

By Schauder's fixed-point theorem, $\mathcal{A}$ has a fixed point $p \in K \subset C([0, T])$. The pair $\{p, u_p\}$ satisfies \eqref{1.1}–\eqref{1.4}.
\end{proof}

\begin{theorem}[Uniqueness of the Solution]\label{thm:uniqueness}
Under assumptions (1)--(4), the solution pair $\{p(t), u(x,t)\}$ to \eqref{1.1}--\eqref{1.4} is unique.
\end{theorem}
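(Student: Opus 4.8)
The plan is to prove uniqueness by assuming two solution pairs $\{p_1, u_1\}$ and $\{p_2, u_2\}$ of \eqref{1.1}--\eqref{1.4}, setting $w := u_1 - u_2$ and $q := p_1 - p_2$, and deriving a coupled system from which a Gr\"onwall-type argument forces $w \equiv 0$ and $q \equiv 0$. First I would subtract the two copies of \eqref{1.1}: $w$ satisfies
\[
w_t = w_{xx} - p_1(t)w - q(t)u_2 + 0, \qquad w(x,0)=0,\qquad w(0,t)=w(l,t)=0,
\]
which shows $w$ solves a direct problem with potential $p_1$ and source $-q(t)u_2(x,t)$. Next I would subtract the two copies of the overdetermination identity \eqref{1.4} (after differentiating in $t$ and integrating by parts, exactly as in the proof of Theorem~\ref{thm:inverse}); since the $g(t)$ and $f$ terms cancel, this yields a formula of the form
\[
q(t)\,g(t) = \int_0^l w(x,t)\,\omega''(x)\,dx - (p_1(t)-p_2(t))\!\int_0^l u_2(x,t)\,\omega(x)\,dx + \cdots,
\]
and after rearranging (using $\int_0^l u_2\omega\,dx = g(t)$ for the $p_1 u_2$ versus $p_2 u_2$ bookkeeping) one arrives at an expression bounding $|q(t)|$ by $C\|w(\cdot,t)\|_{L^2}$, using assumption (4) that $g(t)\neq 0$ so that $\min_{[0,T]}|g|>0$, and the uniform bound on $\|u_2(\cdot,t)\|_{L^2}$ from Theorem~\ref{thm:apriori}.

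The second main step is the energy estimate for $w$. Multiplying the equation for $w$ by $\bar w$, integrating over $(0,l)$, integrating by parts and taking real parts gives
\[
\frac12\frac{d}{dt}\|w\|_{L^2}^2 + \|w_x\|_{L^2}^2 + \mathrm{Re}\,p_1(t)\|w\|_{L^2}^2 = -\mathrm{Re}\!\int_0^l q(t)\,u_2(x,t)\,\bar w(x,t)\,dx.
\]
Dropping $\|w_x\|_{L^2}^2 \ge 0$, bounding $\mathrm{Re}\,p_1(t)\|w\|^2 \ge -P\|w\|^2$, and estimating the right-hand side by Cauchy--Schwarz and Young as $|q(t)|\,\|u_2(\cdot,t)\|_{L^2}\|w\|_{L^2} \le \tfrac12|q(t)|^2\|u_2\|_{L^2}^2 + \tfrac12\|w\|_{L^2}^2$, then inserting the bound $|q(t)| \le C\|w(\cdot,t)\|_{L^2}$ from the previous paragraph, I obtain a differential inequality of the form $\frac{d}{dt}\|w(\cdot,t)\|_{L^2}^2 \le C'\|w(\cdot,t)\|_{L^2}^2$ with $C'$ depending only on $P$, $l$, $T$, $\|\omega''\|_{L^2}$, $\min|g|$, and $C_0$. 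Since $\|w(\cdot,0)\|_{L^2}^2 = 0$, Gr\"onwall's lemma gives $w \equiv 0$ on $[0,T]$, and then $q \equiv 0$ follows from $|q(t)| \le C\|w(\cdot,t)\|_{L^2} = 0$, so $p_1 = p_2$ and $u_1 = u_2$.

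I expect the main obstacle to be the careful derivation of the $|q(t)| \le C\|w(\cdot,t)\|_{L^2}$ bound: one must differentiate \eqref{1.4} in time, substitute the PDE \eqref{1.1} for both solutions, integrate $\int u_{xx}\omega\,dx$ by parts twice using $\omega(0)=\omega(l)=0$ and the homogeneous boundary conditions on $u_i$ to kill all boundary terms, and then organize the algebra so that the unknown $q(t)$ appears isolated and multiplied by the nonvanishing factor $g(t)$ (rather than by a factor that could degenerate). A subtlety to check is the regularity needed to justify differentiating \eqref{1.4}: since $g \in C^1$ and, for $t>0$, $u$ is smooth enough that $\int_0^l u(x,t)\omega(x)\,dx$ is differentiable with $\frac{d}{dt}\int u\omega = \int u_t\omega$, this is legitimate on $(0,T]$ and extends to $[0,T]$ by continuity; I would remark on this rather than belabor it. Everything else is a routine repetition of the energy/Gr\"onwall machinery already used in Theorems~\ref{thm:direct-wellposed} and \ref{thm:apriori}.
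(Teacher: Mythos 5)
Your proposal is correct and takes essentially the same route as the paper: the same decomposition $w=u_1-u_2$, $q=p_1-p_2$, the same identity $q(t)\,g(t)=\int_0^l w\,\omega''\,dx$ obtained by differentiating \eqref{1.4}, substituting the PDE and integrating by parts with $\omega(0)=\omega(l)=0$, the resulting bound $|q(t)|\le C\|w(\cdot,t)\|_{L^2}$ (valid since $\min_{[0,T]}|g|>0$), and the same energy/Gr\"onwall argument giving $w\equiv 0$ and then $q\equiv 0$. The only cosmetic difference is that you close the estimate with Young's inequality on the coupling term where the paper substitutes the bound on $|q|$ directly into $|q|\,M_1\|w\|_{L^2}$; both yield the same differential inequality.
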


\begin{proof}
Suppose $\{p, u\}$ and $\{\tilde{p}, \tilde{u}\}$ are solution pairs. Define
\[
U(x,t) := u(x,t) - \tilde{u}(x,t), \quad P(t) := p(t) - \tilde{p}(t).
\]
Subtracting the two PDEs gives:
\begin{align}
    U_t &= U_{xx} - p(t) U - P(t) \tilde{u}(x,t), \quad (x,t) \in Q, \label{eq:U} \\
    U(x,0) &= 0, \quad x \in [0,l], \nonumber \\
    U(0,t) &= U(l,t) = 0, \quad t \in [0,T], \nonumber
\end{align}
Differentiate \eqref{1.4} for each solution $(u, \tilde{u})$ with respect to $t$:
\[
\int_0^l u_t \,\,\omega \,\, dx = g'(t), \quad\int_0^l \tilde{u}_t \,\, \omega \,\  dx = g'(t),\,\,  t \in [0,T],  
\]
with $g$ from \eqref{1.4}.

Substitute the PDE \eqref{1.1} into these expressions. Using integration by parts and the boundary conditions 
$\omega(0) = \omega(l)=0$, we obtain:
\[
g'(t) = \int_0^l u \omega_{xx} dx - pg(t) + \int_0^l f \omega dx,\\
\]
\[
g'(t) = \int_0^l \tilde{u} \omega_{xx} dx - \tilde{p}g(t) + \int_0^l f \omega dx.\\
\]
Subtracting the two identities yields
\begin{equation} \label{eq:Pg}
P(t) g(t) = \int_0^l U \omega_{xx}  dx.
\end{equation}

Since $g(t) \neq 0$ on $[0,T]$, we can write
\begin{equation} \label{eq:P}
P(t)  = \frac{\int_0^l U \omega_{xx}  dx}{g(t)}.
\end{equation}

Consider the energy functional $E(t) = \frac{1}{2} \int_0^l |U|^2 dx$. Differentiate and substitute \eqref{eq:U}:
\begin{align*}
\frac{d}{dt}E(t) &= Re\int_0^l \bar{U}\,\, U_t  dx \\
&= Re\int_0^l \bar{U} [U_{xx} - p(t)U - P(t)\tilde{u}]  dx \\
&= Re\underbrace{\int_0^l \bar{U} U_{xx}  dx}_{I_1} - p(t)\underbrace{\int_0^l |U|^2  dx}_{I_2} - P(t) Re \underbrace{\int_0^l \bar{U} \tilde{u}  dx}_{I_3}.
\end{align*}

For $I_1$, integrate by parts and apply boundary conditions $U(0,t) = U(l,t) = 0$:
\[
I_1 = \left[ \bar{U} U_x \right]_0^l - \int_0^l |U_x|^2  dx = - \|U_x\|_{L^2}^2.
\]
Thus:
\[
\frac{d}{dt}E(t) = -\|U_x\|_{L^2}^2 - p(t) \|U\|_{L^2}^2 - P(t) Re \int_0^l \bar{U} \tilde{u}  dx.
\]
Since $-\|U_x\|_{L^2}^2 \leq 0$, we have:
\begin{equation}\label{eq:E}
\frac{d}{dt}E(t) \leq -p(t) \|U\|_{L^2}^2 - P(t) Re\int_0^l \bar{U} \tilde{u}  dx.
\end{equation}

From \eqref{eq:P} and the Cauchy–Schwarz inequality:
\[
|P(t)| \leq \frac{1}{|g(t)|} \left| \int_0^l U \omega_{xx}  dx \right| \leq \frac{ \| \omega_{xx} \|_{L^2} }{ |g(t)| } \| U \|_{L^2}.
\]
Define the following constants:
\begin{align*}
M_1 &:= \sup_{t \in [0,T]} \| \tilde{u}(\cdot,t) \|_{L^2} < \infty \quad 
\text{(since $\tilde{u} \in C([0,T];H^2(0,l) \cap H_0^1(0,l))$ by Theorem \ref{thm:direct-wellposed})} \\
M_2 &:= \| \omega_{xx} \|_{L^2} < \infty \quad 
\text{(since $\omega \in C^2([0,l])$ by assumption (4))} \\
m &:= \min_{t \in [0,T]} |g(t)| > 0 \quad 
\text{(since $g \in C^1([0,T])$ and $g(t) \neq 0$ by assumption (4))}.
\end{align*}
Then:
\begin{equation}\label{eq:P(t)}
|P(t)| \leq \frac{M_2}{m}\|U\|_{L^2}.  
\end{equation}
Applying Cauchy-Schwarz to $I_3$:
\begin{equation}\label{eq:M1}
\left| Re\int_0^l \bar{U} \tilde{u}  dx \right| \leq \|U\|_{L^2} \|\tilde{u}\|_{L^2} \leq M_1 \|U\|_{L^2}.
\end{equation}
Substitute \eqref{eq:P(t)} and \eqref{eq:M1} into \eqref{eq:E}:
\begin{align*}
\frac{d}{dt}E(t) &\leq -p(t) \|U\|_{L^2}^2 + |P(t)| M_1 \|U\|_{L^2} \\
&\leq -p(t) \|U\|_{L^2}^2 + \left( \frac{M_2}{m} \|U\|_{L^2} \right) M_1 \|U\|_{L^2} \\
&= \left( -p(t) + \frac{M_1 M_2}{m} \right) \|U\|_{L^2}^2.
\end{align*}
Since $\|U\|_{L^2}^2=2E(t)$, we obtain:
\[
\frac{d}{dt}E(t) \leq 2 \left( -p(t) + \frac{M_1 M_2}{m} \right) E(t).
\]
Let $K := \sup_{t \in [0,T]} \left| -p(t) + \frac{M_1 M_2}{m} \right| < \infty$ (finite because $p \in L^{\infty}([0,T])$ by assumption (1), and $M_1, M_2, m$ are finite with $m > 0$). Then:

\begin{equation} \label{eq:Et}
  \frac{d}{dt}E(t) \leq 2K E(t).  
\end{equation}

Since $E(0) = 0$, Grönwall's inequality applied to \eqref{eq:Et} implies:
\[
E(t) = 0 \quad  \forall t \in [0, T].
\]
Hence, $U = 0$ for all $(x,t) \in \bar{Q}$, so $u=\tilde{u}$.
Substituting $U=0$ into \eqref{eq:P} gives:
\[
P(t)g(t) = 0 \implies P(t) = 0 \quad \forall t \in [0,T],
\]
Since $g(t) \neq 0$, thus, $p = \tilde{p}$, proving uniqueness.
\end{proof}

\section{Numerical solution of the direct problem}\label{4}
In this section, we present a numerical solution of the direct problem for the diffusion equation \eqref{1.1} with initial \eqref{1.2} and boundary \eqref{1.3} using the finite difference method.
We divide the spatial domain $[0,l]$ into $N$ grid points with spacing $h=l/N$ time domain $[0, T]$ into $M$ grid points with spacing $\tau=T/M$. Let the grid points be denoted as  $x_i=ih$, $i=0,1,…, N$, $t_k=k\tau$, for $k=0,1,…, M.$ Let $u_i^k$ be the numerical approximation to $u(x_i, t_k)$. 
To construct a reliable difference scheme for Eq. \eqref{1.1}, we  use the Crank-Nicolson scheme, then problem \eqref{1.1}-\eqref{1.3} can be rewritten in the following form:
\begin{equation}\label{3.1}
\frac{u_i^{k+1} - u_i^{k}}{\tau} - \frac{u_{i+1}^{k+1} - 2u_i^{k+1} + u_{i-1}^{k+1} + u_{i+1}^{k} - 2u_i^{k} + u_{i-1}^{k}}{2h^2} + p^{k+1} u_i^{k+1} = \frac{1}{2}(f_i^{k+1}+f_i^{k}). 
\end{equation}
Next, we multiply both sides by $\tau$ and rearrange as follows:
\begin{equation} \label{3.2}
(1 + \frac{\tau}{h^2} + \tau p^{k+1})u_i^{k+1} - \frac{\tau}{2h^2}(u_{i+1}^{k+1} + u_{i-1}^{k+1}) = (1 - \frac{\tau}{h^2})u_i^k + \frac{\tau}{2h^2}(u_{i+1}^k + u_{i-1}^k) + \frac{\tau}{2}(f_i^{k+1} + f_i^k).
\end{equation}

The discretization of the conditions \eqref{1.2} and \eqref{1.3} give 
\[
 u_i^0 = \varphi(x_i), \quad i = 0, 1, . . . ,N,
\]
\begin{equation} \label{3.3}
u_0^k = 0, \quad u_N^k = 0,  \quad i = 0, 1, . . . ,N, k = 0, 1, . . . , M.
\end{equation}
\subsection{Stability of the implicit difference schemes}
To justify the proposed algorithm, we will derive estimates of the stability of schemes \eqref{3.1} and \eqref{3.2} concerning the initial data and the right-hand side.

\begin{theorem}\label{theorem2} 
\textit{(Stability of the Finite Difference Scheme).}  
\textit{Suppose $\{u_i^k \mid 0 \leq i \leq N, 0 \leq k \leq M\}$ is the solution of the finite difference scheme (\ref{3.2}) with Dirichlet boundary conditions (\ref{3.3}). Assume the coefficient $p^{k+1} > 0 $ \text{for all } k.}
\textit{Then, the scheme is \textbf{unconditionally stable} in the maximum norm ($L^\infty$), meaning for any time step $\tau > 0$ and mesh size $h > 0$, the solution satisfies the stability estimate:}
\[
\|u^{k+1}\|_\infty \leq \|u^k\|_\infty + \tau \|f^{k+1/2}\|_\infty, \quad 0 \leq k \leq M-1,
\]
\textit{where $\|u^k\|_\infty = \max_{0 \leq i \leq N} |u_i^k|$ and $\|f^{k+1/2}\|_\infty = \max_{0 \leq i \leq N} \left|\frac{1}{2}(f_i^{k+1} + f_i^k)\right|$.}\\
\textit{\textbf{Corollary} (Stability for Homogeneous Case). If $f \equiv 0$, the solution is non-increasing in time:}
\[
\|u^{k+1}\|_\infty \leq \|u^k\|_\infty.
\]
\end{theorem}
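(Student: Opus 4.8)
The plan is to establish the estimate through a discrete maximum principle, exploiting the M-matrix structure of the implicit part of the Crank--Nicolson operator. First I would recast the scheme \eqref{3.2} in the operator form $A u^{k+1} = B u^{k} + \tfrac{\tau}{2}(f^{k+1}+f^{k})$, where $A$ acts on the new time level with diagonal entries $1 + \tfrac{\tau}{h^2} + \tau p^{k+1}$ and off-diagonal entries $-\tfrac{\tau}{2h^2}$, while $B$ acts on the old level with diagonal $1 - \tfrac{\tau}{h^2}$ and off-diagonal $\tfrac{\tau}{2h^2}$. Writing $r := \tau/h^2$, the matrix $A$ has a strictly dominant positive diagonal (its interior row sums equal $1 + \tau p^{k+1}$) and nonpositive off-diagonals, so it is an irreducibly diagonally dominant M-matrix. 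From this I would deduce $A^{-1}\ge 0$ entrywise and, by comparing $A\mathbf{1}$ with the constant vector, the bound $\|A^{-1}\|_\infty \le (1+\tau p^{k+1})^{-1}\le 1$, using $p^{k+1}>0$. This alone controls the source contribution by exactly $\tau\|f^{k+1/2}\|_\infty$.

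Next I would localize at the extremal index. Let $i^\ast$ be an interior node at which $|u_i^{k+1}|$ attains $\|u^{k+1}\|_\infty$; replacing $u$ by $-u$ if necessary, assume $u_{i^\ast}^{k+1}=\|u^{k+1}\|_\infty\ge 0$. Since the boundary values vanish, $i^\ast$ is genuinely interior whenever $\|u^{k+1}\|_\infty>0$. Evaluating \eqref{3.2} at $i^\ast$ and using $u_{i^\ast\pm1}^{k+1}\le u_{i^\ast}^{k+1}$ to absorb the two implicit off-diagonal terms into the diagonal, the factor $1+r$ collapses to $1$ and I obtain
\[
(1+\tau p^{k+1})\,\|u^{k+1}\|_\infty \;\le\; (1-r)\,u_{i^\ast}^{k} + \tfrac{r}{2}\bigl(u_{i^\ast+1}^{k}+u_{i^\ast-1}^{k}\bigr) + \tfrac{\tau}{2}\bigl(f_{i^\ast}^{k+1}+f_{i^\ast}^{k}\bigr).
\]
It then remains to dominate this explicit right-hand side by $\|u^k\|_\infty + \tau\|f^{k+1/2}\|_\infty$ once the $(1+\tau p^{k+1})$ factor is accounted for.

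The main obstacle is precisely this explicit term, and it is where the \emph{unconditional} character of the claim is genuinely decided. When $r=\tau/h^2\le 1$ the coefficient $1-r$ is nonnegative and the combination $(1-r)u_{i^\ast}^k+\tfrac{r}{2}(u_{i^\ast+1}^k+u_{i^\ast-1}^k)$ is a convex average of values bounded by $\|u^k\|_\infty$, so the estimate closes at once and $(1+\tau p^{k+1})^{-1}\le 1$ only improves the constant. For $\tau>h^2$, however, the diagonal coefficient $1-r$ turns negative, the explicit update ceases to be monotone, and a crude triangle-inequality bound produces the factor $|1-r|+r=2r-1>1$, which is fatal for a constant-one estimate. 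Overcoming this is the heart of the proof: the stated bound is equivalent to the one-step amplification map $A^{-1}B$ being a contraction in the maximum norm, and since $A^{-1}\ge 0$ this reduces to establishing $A^{-1}B\ge 0$ entrywise, for then $\|A^{-1}B\|_\infty=\|A^{-1}B\,\mathbf{1}\|_\infty$, and the latter is controlled by $B\mathbf{1}\le\mathbf{1}$ together with $\|A^{-1}\|_\infty\le(1+\tau p^{k+1})^{-1}$. I would therefore concentrate the effort on verifying this monotonicity, attempting to extract the required positivity from the strictly positive potential $\tau p^{k+1}$ that thickens the diagonal of $A$; I expect this step to be by far the most delicate, precisely because it is the sign control of $1-\tau/h^2$ that separates the unconditional regime from a CFL-restricted one.

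Finally, once $\|u^{k+1}\|_\infty\le\|u^k\|_\infty+\tau\|f^{k+1/2}\|_\infty$ is in hand, the homogeneous corollary is immediate: setting $f\equiv 0$ removes the forcing term and leaves $\|u^{k+1}\|_\infty\le\|u^k\|_\infty$, so the discrete solution is nonincreasing in the maximum norm. Iterating the one-step bound over $k$ would then yield the global stability of the scheme with respect to both the initial data and the right-hand side.
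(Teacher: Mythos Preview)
Your plan follows exactly the route taken in the paper: localise at the interior index $j$ where $|u^{k+1}|$ attains its maximum, use $|u_{j\pm1}^{k+1}|\le|u_j^{k+1}|$ to collapse the implicit off-diagonal terms into the diagonal, and then bound the explicit right-hand side by $\|u^k\|_\infty$. Where you depart from the paper is in flagging the regime $r=\tau/h^2>1$ as the crux. The paper does \emph{not} confront this: after ``taking absolute values and applying the triangle inequality'' it writes $(1-\tau/h^2)|u_j^k|$ on the right-hand side, whereas the triangle inequality actually produces $|1-\tau/h^2|\,|u_j^k|$, and these coincide only when $\tau\le h^2$. Thus the paper's argument, read literally, establishes the estimate only under the restriction $r\le 1$, and your instinct that the unconditional case is ``by far the most delicate'' is correct---the paper simply elides it.

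That said, your proposed remedy will not close the gap either. You aim to show $A^{-1}B\ge 0$ entrywise, but since $A+B=(2+\tau p^{k+1})I$ one has $A^{-1}B=(2+\tau p^{k+1})A^{-1}-I$: the off-diagonal entries are indeed nonnegative (because $A^{-1}\ge 0$), but the diagonal entries equal $(2+\tau p^{k+1})(A^{-1})_{ii}-1$, and for fixed $\tau p^{k+1}$ the quantities $(A^{-1})_{ii}$ tend to zero as $r\to\infty$, so the diagonal of $A^{-1}B$ becomes negative. Hence $A^{-1}B$ is not monotone for large $r$, and the positivity argument you sketch cannot deliver the unconditional contractivity; any proof valid for all $r$ would have to rely on a mechanism different from both the paper's and yours.
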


\begin{proof}\label{proof2} At interior points \(i=1,…,N-1\), the scheme is:  
\[
(1 + \frac{\tau}{h^2} + \tau p^{k+1})u_i^{k+1} - \frac{\tau}{2h^2}(u_{i+1}^{k+1} + u_{i-1}^{k+1}) = (1 - \frac{\tau}{h^2})u_i^k + \frac{\tau}{2h^2}(u_{i+1}^k + u_{i-1}^k) + \tau f_i^{k+1/2},
\] 
where \[ f_i^{k+1/2} = \frac{1}{2}(f_i^{k+1} + f_i^k).\]
By \eqref{3.3}, \(u_0^k = u_N^k = 0\) for all $k$. Thus, the maximum norm $\|u^{k+1}\|_\infty = \max_{0 \leq i \leq N} |u_i^{k+1}|$
 is attained at an interior point $j$

Let \(\|u^{k+1}\|_\infty = |u_j^{k+1}|\). At $i = j$, take absolute values and apply the triangle inequality:
\[
(1 + \frac{\tau}{h^2} + \tau p^{k+1})|u_j^{k+1}| - \frac{\tau}{2h^2}(|u_{j+1}^{k+1}| + |u_{j-1}^{k+1}|) \leq (1 - \frac{\tau}{h^2})|u_j^k| + \frac{\tau}{2h^2}(|u_{j+1}^k| + |u_{j-1}^k|) + \tau|f_j^{k+1/2}|.
\]
Since \(|u_{j\pm1}^{k+1}| \leq \|u^{k+1}\|_\infty = |u_j^{k+1}|\) and \(|u_{j\pm1}^k| \leq \|u^k\|_\infty = |u_j^k|\), we have:
\[
(1 + \frac{\tau}{h^2} + \tau p^{k+1})|u_j^{k+1}| - \frac{\tau}{h^2}|u_j^{k+1}| \leq (1 - \frac{\tau}{h^2})|u_j^k| + \frac{\tau}{h^2}|u_j^k| + \tau|f_j^{k+1/2}|.
\]
We can simplify this as
\[
(1 + \tau p^{k+1})|u_j^{k+1}| \leq |u_j^k| + \tau |f_j^{k+1/2}|.
\]
Since $p^{k+1} > 0$,  $\frac{1}{(1 + \tau p^{k+1})}\leq 1$ and $\frac{\tau}{1 + \tau p^{k+1}} \leq \tau$, we have:
\[
|u_j^{k+1}| \leq |u_j^k| +\tau|f_j^{k+1/2}|.
\]
Taking the maximum over all \(i\):
\[
\|u^{k+1}\|_\infty \leq \|u^k\|_\infty + \tau \|f^{k+1/2}\|_\infty.
\]
For homogeneous cases  $f \equiv 0$, the solution satisfies:
\[
\|u^{k+1}\|_\infty \leq \|u^k\|_\infty,
\]
which shows that the scheme is unconditionally stable. 
\end{proof}

\subsection{Convergence Analysis}
We are now ready to show the convergence of the difference scheme \eqref{3.2}.
\begin{theorem}\label{theorem3}
\textbf{(Convergence).} \textit{Let \(u(x,t)\) be the exact solution of the PDE with sufficient smoothness, and \(\{u_i^k\}\) be the numerical solution of the finite difference scheme \eqref{3.2}. Assume the truncation error is \(\mathcal{O}(\tau^2 + h^2)\). Then, under the stability result of Theorem \ref{theorem2}, the numerical solution converges to the exact solution with the error bound:}
\[
\|e^{k}\|_\infty \leq C \left(\tau^2 + h^2\right), \quad 0 \leq k \leq M,
\]
\textit{where \(e_i^k = u(x_i, t_k) - u_i^k\), and \(C > 0\) is a constant independent of \(\tau\) and \(h\).}
\end{theorem}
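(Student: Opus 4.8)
The plan is to run the standard stability-implies-convergence argument for the Crank--Nicolson scheme. First I would write down the equation satisfied by the error $e_i^k = u(x_i,t_k) - u_i^k$. Since the exact solution $u$ satisfies the same scheme \eqref{3.2} up to the local truncation error $R_i^{k}$, subtracting the numerical scheme from the exact relation gives, at interior points $i=1,\dots,N-1$,
\[
\Bigl(1 + \tfrac{\tau}{h^2} + \tau p^{k+1}\Bigr)e_i^{k+1} - \tfrac{\tau}{2h^2}\bigl(e_{i+1}^{k+1} + e_{i-1}^{k+1}\bigr) = \Bigl(1 - \tfrac{\tau}{h^2}\Bigr)e_i^k + \tfrac{\tau}{2h^2}\bigl(e_{i+1}^k + e_{i-1}^k\bigr) + \tau R_i^{k},
\]
with $e_i^0 = 0$ (the initial data is interpolated exactly) and $e_0^k = e_N^k = 0$ (homogeneous Dirichlet). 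By the smoothness hypothesis and a Taylor expansion about the midpoint $t_{k+1/2}$, we have $\|R^{k}\|_\infty \le C_0(\tau^2 + h^2)$ uniformly in $k$.

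Next I would observe that the error equation has exactly the same form as the scheme in Theorem~\ref{theorem2}, with $f_i^{k+1/2}$ replaced by the truncation term $R_i^{k}$. Applying the $L^\infty$ stability estimate of Theorem~\ref{theorem2} verbatim — the maximum of $|e_i^{k+1}|$ is attained at an interior point, bound the neighbour values by the maximum, and use $p^{k+1}>0$ so that $1/(1+\tau p^{k+1})\le 1$ — yields the one-step inequality
\[
\|e^{k+1}\|_\infty \le \|e^k\|_\infty + \tau\,\|R^{k}\|_\infty \le \|e^k\|_\infty + C_0\,\tau\,(\tau^2 + h^2).
\]
Iterating this from $k=0$ and using $e^0 = 0$ gives $\|e^k\|_\infty \le C_0\,k\tau\,(\tau^2+h^2) \le C_0 T\,(\tau^2+h^2)$, so the claim holds with $C = C_0 T$, a constant independent of $\tau$ and $h$.

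The only genuinely nontrivial point is the truncation-error estimate $\|R^{k}\|_\infty = \mathcal{O}(\tau^2 + h^2)$ — but this is \emph{assumed} in the statement, so the remaining work is entirely mechanical. If one wished to discharge that assumption as well, the main obstacle would be verifying the second-order-in-time accuracy of Crank--Nicolson in the presence of the reaction term $p(t)u$: one must expand both the difference quotient and the spatial average about $t_{k+1/2}$ and check that the $p^{k+1}u_i^{k+1}$ term (rather than a symmetric average $\tfrac12(p^{k}u_i^k + p^{k+1}u_i^{k+1})$) still produces only an $\mathcal{O}(\tau^2)$ consistency error after multiplication by $\tau$ — which it does, since the scheme is implicit and the extra $\mathcal{O}(\tau)$ discrepancy in the reaction term is multiplied by $\tau$. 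Given the hypothesis, I would simply cite it and proceed directly to the stability-plus-Gronwall (here, discrete summation) step.
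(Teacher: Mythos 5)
Your proposal is correct and follows essentially the same route as the paper: form the error equation at interior points, invoke the assumed truncation bound $\|R^k\|_\infty\le C_0(\tau^2+h^2)$, apply the $L^\infty$ stability estimate of Theorem~\ref{theorem2} to the error recursion, and telescope from $e^0=0$ to obtain $C=C_0T$. One caveat on your closing aside (which you rightly exclude from the proof proper): the asymmetric reaction term $p^{k+1}u_i^{k+1}$ contributes an $\mathcal{O}(\tau)$ consistency error in the per-unit-time residual $T_i^{k+1/2}$, and since the one-step recursion accumulates $\tau\|T^{k+1/2}\|_\infty$ over $M=T/\tau$ steps, this would degrade the global rate to first order in time — your "multiplied by $\tau$" argument only controls the per-step error, not the sum; this is harmless here solely because the $\mathcal{O}(\tau^2+h^2)$ truncation bound is taken as a hypothesis of the theorem.
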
 

\begin{proof} Let define the error \(e_i^k = u(x_i, t_k) - u_i^k\). Substituting \(u(x_i, t_k)\) into the scheme \eqref{3.2}, the error satisfies:
\[
\frac{e_i^{k+1} - e_i^{k}}{\tau} - \frac{e_{i+1}^{k+1} - 2e_i^{k+1} + e_{i-1}^{k+1} + e_{i+1}^{k} - 2e_i^{k} + e_{i-1}^{k}}{2h^2} + p^{k+1} e_i^{k+1} = T_i^{k+1/2},
\]
where \(T_i^{k+1/2}\) is the truncation error at time \(t_{k+1/2} = t_k + \tau/2\).

Using Taylor expansions, the Crank-Nicolson approximation: \(\frac{u_i^{k+1} - u_i^k}{\tau}\) has a truncation error of \(\mathcal{O}(\tau^2)\). The central difference approximation \\
\[
\frac{u_{i+1}^{k+1} - 2u_i^{k+1} + u_{i-1}^{k+1} + u_{i+1}^k - 2u_i^k + u_{i-1}^k}{2h^2}
\]
has a truncation error of \(\mathcal{O}(h^2)\).
And the reaction and source terms: The  \(p^{k+1} u_i^{k+1}\) and \(\frac{1}{2}(f_i^{k+1} + f_i^k)\) introduce no additional error beyond \(\mathcal{O}(\tau^2 + h^2)\).
Thus, the truncation error satisfies:
\[
\|T^{k+1/2}\|_\infty \leq C_T (\tau^2 + h^2),
\]
where \(C_T > 0\) is a constant.

From Theorem \ref{theorem2} we know  that the error equation inherits the stability of the original scheme:
\[
\|e^{k+1}\|_\infty \leq \|e^k\|_\infty + \tau \|T^{k+1/2}\|_\infty.
\]
Substituting the truncation error bound, we get
\[
\|e^{k+1}\|_\infty \leq \|e^k\|_\infty + \tau C_T (\tau^2 + h^2).
\]
Iterating over time steps yields
\[
\|e^{k}\|_\infty \leq \|e^0\|_\infty + \sum_{m=1}^k \tau C_T (\tau^2 + h^2).
\]
Since \(\|e^0\|_\infty = 0\) (assuming exact initial conditions), and \(\sum_{m=1}^k \tau = t_k \leq T\), we have
\[
\|e^{k}\|_\infty \leq C_T T (\tau^2 + h^2).
\]
Combining results, we obtain
\[
\|e^{k}\|_\infty \leq C (\tau^2 + h^2),
\]
where \(C = C_T T\). 
\end{proof}

\section{Numerical Solution of the Inverse Problem}\label{5}

This section presents and compares three numerical methods for simultaneously identifying the time-dependent coefficient \( p(t) \) and computing the solution \( u(x,t) \) of the inverse problem \eqref{1.1}–\eqref{1.4}. The first method, referred to as the integration-based approach, is a classical technique relying on analytical manipulation of the overdetermination condition. The second is a Newton–Raphson-based iterative method that refines \( p(t) \) by minimising the residual in the integral condition. Finally, we present a third approach based on the Physics-Informed Neural Network (PINN) framework, which enables mesh-free learning of both \( u(x,t) \) and \( p(t) \) by embedding the governing PDE into the training process.

The three approaches are implemented independently and compared through numerical experiments. All numerical examples are conducted on the domain \( [0,1] \times [0,T] \), and the results are benchmarked against an exact analytical solution to validate accuracy.

\subsection{Integration-based approach for identification of \texorpdfstring{$\{p(t), u(x,t)\}$}{p(t), u(x,t)}}

We first consider a classical approach that leverages the overdetermination condition \eqref{1.4} to derive an explicit formula for the unknown coefficient \( p(t) \), assuming that the solution \( u(x,t) \) is available at each time step via numerical simulation of the direct problem \eqref{1.1}–\eqref{1.3}.

Differentiating the integral condition \eqref{1.4} with respect to time yields:
\begin{equation}\label{eq:5.1}
\frac{d}{dt} \left( \int_0^l u(x,t)\, \omega(x)\, dx \right)
= \int_0^l u_t(x,t)\, \omega(x)\, dx = g'(t),
\end{equation}

Substituting the PDE \eqref{1.1} into \eqref{eq:5.1} gives:
\[
g'(t) = \int_0^l \left( u_{xx}(x,t) - p(t)\, u(x,t) + f(x,t) \right) \omega(x)\, dx.
\]

We now integrate the \( u_{xx} \) term by parts twice:
\[
\int_0^l u_{xx}(x,t)\, \omega(x)\, dx
= \left[ u_x \omega \right]_0^l - \left[ u \omega' \right]_0^l + \int_0^l u(x,t)\, \omega''(x)\, dx.
\]
Since \( u(0,t) = u(l,t) = 0 \) and \( \omega(0) = \omega(l) = 0 \), the second boundary term vanishes, and we obtain:
\[
\int_0^l u_{xx}(x,t)\, \omega(x)\, dx =  \int_0^l u(x,t)\, \omega''(x)\, dx.
\]

Substituting back, and isolating \( p(t) \), we arrive at the reconstruction formula:
\begin{equation}\label{eq:pt_integral}
p(t) = \frac{\int_0^l u(x,t)\, \omega''(x)\, dx + \int_0^l f(x,t)\, \omega(x)\, dx - g'(t) }{ g(t) }.
\end{equation}
Here, we have assumed in assumption (4) that $g(t) \neq 0$.
This expression allows us to recover \( p(t) \) pointwise at each time level after computing the numerical solution \( u(x,t) \) of the direct problem.

\medskip

The full computational procedure is summarised in Algorithm~\ref{alg:integration}, where we use the Crank–Nicolson scheme described in Section~\ref{4} to compute \( u(x,t) \).

\begin{algorithm}[H]
\caption{Integration-based method for identifying \( \{p(t), u(x,t)\} \)}
\label{alg:integration}
\begin{algorithmic}[1]
\Require Initial condition \( u_i^0 = \varphi(x_i) \), boundary values \( u_0^k = u_N^k = 0 \)
\Require Known functions \( f(x,t),\, \omega(x),\, g(t),\, g'(t) \), grid steps \( h, \tau \), and total steps \( N, M \)
\State Initialize \( p^0 \) using formula \eqref{eq:pt_integral}
\For{$k = 0$ to $M-1$}
    \State Solve the linear system from scheme \eqref{3.2} for \( u^{k+1} \)
    \State Set boundary conditions
    \State Update \( p^{k+1} \) using equation \eqref{eq:pt_integral}
\EndFor
\State \Return \( \{ u_i^k \}_{i,k=0}^{N,M} \), \( \{ p^k \}_{k=0}^M \)
\end{algorithmic}
\end{algorithm}

\subsection{Newton--Raphson approach for identification of \texorpdfstring{$\{p(t), u(x,t)\}$}{p(t), u(x,t)}}

We now describe a more accurate and iterative method for identifying the time-dependent coefficient \( p(t) \) based on minimising the discrepancy in the overdetermination condition \eqref{1.4}. At each time step \( t^{k+1} \), the unknown value \( p^{k+1} \) is computed by solving the nonlinear equation
\begin{equation} \label{eq:newton_residual}
F(p^{k+1}) := \sum_{i=1}^{N-1} u_i^{k+1}(p^{k+1})\, \omega(x_i)\, h - g(t^{k+1}) = 0,
\end{equation}
where \( u_i^{k+1}(p^{k+1}) \) denotes the discrete numerical solution of the direct problem at time \( t^{k+1} \) computed using scheme \eqref{3.2} with current guess for \( p^{k+1} \).

We apply the classical Newton–Raphson iteration to solve \eqref{eq:newton_residual}. Given an initial approximation \( p^{k+1,(0)} \), the update rule is:
\begin{equation} \label{eq:newton_update}
p^{k+1,(j+1)} = p^{k+1,(j)} - \frac{F(p^{k+1,(j)})}{F'(p^{k+1,(j)})},
\end{equation}
where \( F'(p) \) is the derivative of the residual function with respect to \( p \), given by
\begin{equation} \label{eq:dfdp}
F'(p^{k+1}) = \sum_{i=1}^{N-1} \frac{\partial u_i^{k+1}}{\partial p^{k+1}} \, \omega(x_i)\, h \, .
\end{equation}

To compute \( \frac{\partial u_i^{k+1}}{\partial p^{k+1}} \), we differentiate the finite difference scheme \eqref{3.2} with respect to \( p^{k+1} \), obtaining the following linear system:
\begin{equation} \label{eq:du_dp_system}
\left( 1 + \frac{\tau}{h^2} + \tau p^{k+1} \right) \frac{\partial u_i^{k+1}}{\partial p^{k+1}} - \frac{\tau}{2h^2} \left( \frac{\partial u_{i+1}^{k+1}}{\partial p^{k+1}} + \frac{\partial u_{i-1}^{k+1}}{\partial p^{k+1}} \right)
= -\tau u_i^{k+1},
\end{equation}
with homogeneous Dirichlet conditions imposed on \( \frac{\partial u_0^{k+1}}{\partial p^{k+1}} \) and \( \frac{\partial u_N^{k+1}}{\partial p^{k+1}} \).

This system has the same tridiagonal structure as the original discretisation, and is efficiently solved using the Thomas algorithm. The full procedure is presented in Algorithm~\ref{alg:newton}.

\begin{algorithm}[H]
\caption{Newton--Raphson method for identifying \( \{p(t), u(x,t)\} \)}
\label{alg:newton}
\begin{algorithmic}[1]
\Require Initial condition \( u_i^0 = \varphi(x_i) \), step sizes \( h, \tau \), tolerance \( \epsilon \)
\Require Known data: \( f(x,t),\, \omega(x),\, g(t) \)
\State Set \( p^0 \gets p_{\text{init}} \)
\For{$k = 0$ to $M-1$}
    \State Initialize iteration: \( p^{k+1,(0)} \gets p^k \), set \( j \gets 0 \)
    \Repeat
        \State Solve Crank--Nicolson scheme \eqref{3.2} with \( p^{k+1,(j)} \) to get \( u_i^{k+1} \)
        \State Evaluate residual \( F(p^{k+1,(j)}) \) using \eqref{eq:newton_residual}
        \State Solve sensitivity system \eqref{eq:du_dp_system} for \( \frac{\partial u_i^{k+1}}{\partial p^{k+1}} \)
        \State Evaluate derivative \( F'(p^{k+1,(j)}) \) via \eqref{eq:dfdp}
        \State Update \( p^{k+1,(j+1)} \gets p^{k+1,(j)} - F / F' \)
        \State \( j \gets j + 1 \)
    \Until{$ |F(p^{k+1,(j)})| < \epsilon $}
    \State Set \( p^{k+1} \gets p^{k+1,(j)} \)
\EndFor
\State \Return \( \{ u_i^k \},\, \{ p^k \} \)
\end{algorithmic}
\end{algorithm}
\begin{remark}
The Newton-Raphson algorithm benefits from several advantageous features. First, both the forward and sensitivity problems share the same tridiagonal matrix structure, allowing for the reuse of efficient solvers such as the Thomas algorithm. Each iteration therefore, has linear complexity \( \mathcal{O}(N) \), where \( N \) is the number of spatial discretisation points, making the method computationally efficient.

Moreover, the implicit time discretisation contributes to the scheme's numerical stability without requiring additional regularisation. The residual \( \|F(p^{k+1})\| \), evaluated at each iteration, serves as both a stopping criterion and a diagnostic tool for monitoring convergence and solution quality.
\end{remark}

\subsection{Physics-Informed Neural Network (PINN) approach for identification of \texorpdfstring{$\{p(t), u(x,t)\}$}{p(t), u(x,t)}}

As a modern alternative to classical numerical schemes, we propose a physics-informed neural network (PINN) approach for the simultaneous identification of the time-dependent coefficient \( p(t) \) and the solution \( u(x,t) \) to the inverse problem \eqref{1.1}--\eqref{1.4}. In this method, the unknowns are approximated by neural networks trained to satisfy the governing PDE, initial and boundary conditions, and the integral overdetermination condition.

\medskip

\textbf{Neural network representation.}
We represent the solution \( u(x,t) \) by a trial function of the form:
\begin{equation} \label{eq:trial}
u_\theta(x,t) = (1 - t)\, \varphi(x) + x(1 - x)\, t\, N_\theta(x,t),
\end{equation}
where \( N_\theta(x,t) \) is a feedforward neural network with parameters \( \theta \). This construction ensures that the initial and boundary conditions,
\[
u_\theta(x,0) = \varphi(x), \quad u_\theta(0,t) = u_\theta(1,t) = 0,
\]
are exactly satisfied for all \( (x,t) \in [0,1] \times [0,T] \).

The unknown coefficient \( p(t) \) is approximated by a second neural network \( p_\eta(t) \), parameterised by weights \( \eta \). The two networks are trained simultaneously using a composite loss functional based on the residuals of the PDE and the integral constraint.

\medskip

\textbf{Loss formulation.}
Let \( \{(x_i, t_i)\}_{i=1}^{N_f} \subset (0,1) \times (0,T) \) be interior collocation points used to enforce the PDE. The residual loss is defined as:
\begin{equation} \label{eq:pinn_pde}
\mathcal{L}_{\mathrm{PDE}} = \frac{1}{N_f} \sum_{i=1}^{N_f}
\left( \frac{\partial u_\theta}{\partial t}(x_i, t_i) - \frac{\partial^2 u_\theta}{\partial x^2}(x_i, t_i)
+ p_\eta(t_i)\, u_\theta(x_i, t_i) - f(x_i, t_i) \right)^2.
\end{equation}

To enforce the integral overdetermination condition \eqref{1.4}, we define the constraint loss:
\begin{equation} \label{eq:pinn_integral}
\mathcal{L}_{\mathrm{int}} = \frac{1}{N_g} \sum_{j=1}^{N_g}
\left( \int_0^1 u_\theta(x, t_j)\, \omega(x)\, dx - g(t_j) \right)^2,
\end{equation}
where \( \{t_j\}_{j=1}^{N_g} \subset [0,T] \) are selected quadrature nodes, and the integral is approximated using a numerical quadrature rule (e.g., trapezoidal or Simpson’s).

The total loss functional is:
\begin{equation} \label{eq:pinn_loss}
\mathcal{L}(\theta, \eta) = \mathcal{L}_{\mathrm{PDE}} + \lambda\, \mathcal{L}_{\mathrm{int}}\,\,,
\end{equation}
where \( \lambda > 0 \) is a penalty parameter controlling the weight of the constraint.

\medskip

\textbf{Neural network architecture.}
The solution network \( N_\theta(x,t) \) consists of 2 hidden layers with 64 neurons each and hyperbolic tangent (tanh) activations. The coefficient network \( p_\eta(t) \) uses 2 hidden layers with 32 neurons each, also using tanh activations. The networks are trained jointly using a two-phase strategy: an initial training phase using the Adam optimiser to explore the solution landscape, followed by a second phase employing the L-BFGS optimiser to refine convergence with high accuracy.

The overall architecture of the PINN—including the trial solution network \( u_\theta(x,t) \), the coefficient network \( p_\eta(t) \), and their respective outputs—is depicted in Figure~\ref{fig:pinn-architecture}.

\begin{figure}[H]
\centering
\includegraphics[width=0.7\textwidth]{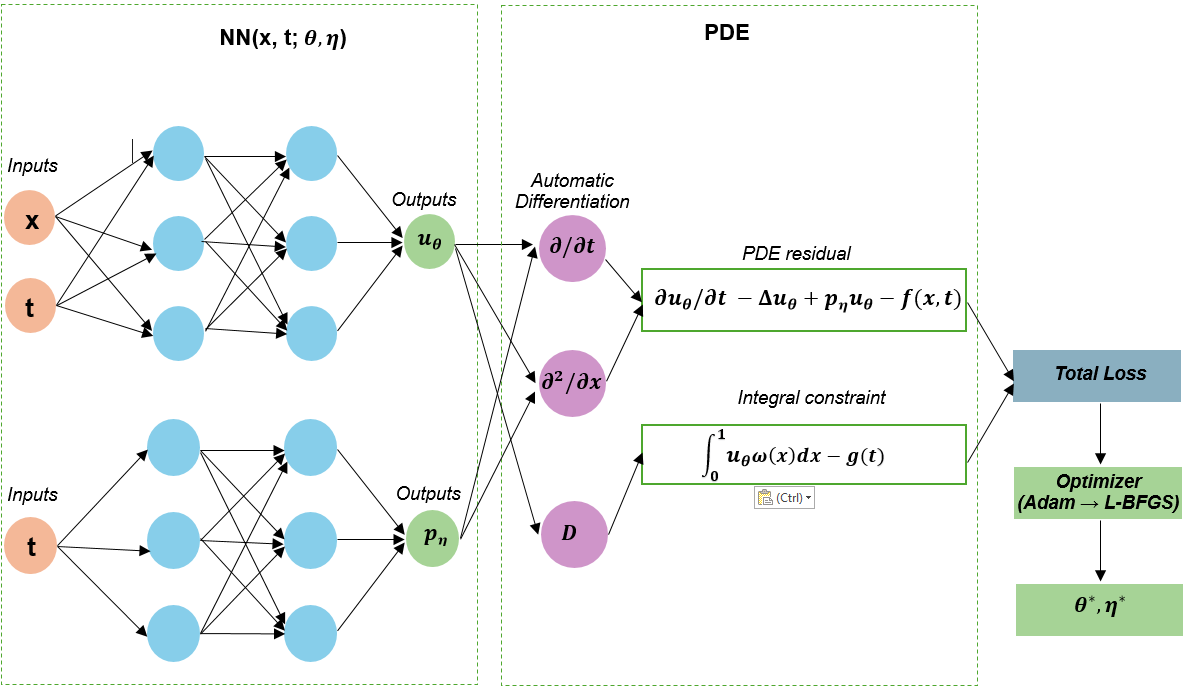}
\caption{Architecture of the PINN for simultaneous approximation of \( u(x,t) \) and \( p(t) \).}
\label{fig:pinn-architecture}
\end{figure}

The training strategy is summarised in Algorithm~\ref{alg:pinn}.

\begin{algorithm}[H]
\caption{PINN for simultaneous identification of \( p(t) \) and \( u(x,t) \)}
\label{alg:pinn}
\begin{algorithmic}[1]
\Require Collocation sets \( \mathcal{S}_{\mathrm{int}} \subset (0,1) \times (0,T) \), \( \mathcal{S}_{\mathrm{intc}} \subset [0,T] \)
\Require Initial function \( \varphi(x) \), known functions \( f(x,t), \omega(x), g(t) \)
\State Define trial solution \( u_\theta(x,t) \) using \eqref{eq:trial}
\State Initialize networks \( N_\theta(x,t) \) and \( p_\eta(t) \)
\For{epoch \( = 1 \) to \( K \)}
    \State Sample mini-batch from \( \mathcal{S}_{\mathrm{int}} \) for evaluating \( \mathcal{L}_{\mathrm{PDE}} \) via \eqref{eq:pinn_pde}
    \State Sample time points \( \{t_j\}_{j=1}^{N_g} \subset \mathcal{S}_{\mathrm{intc}} \), compute \( \mathcal{L}_{\mathrm{int}} \) via \eqref{eq:pinn_integral}
    \State Compute total loss \( \mathcal{L} = \mathcal{L}_{\mathrm{PDE}} + \lambda\, \mathcal{L}_{\mathrm{int}} \)
    \State Update parameters \( \theta, \eta \) using gradient-based optimizer (Adam or L-BFGS)
\EndFor
\State \Return trained networks \( u_\theta(x,t) \), \( p_\eta(t) \)
\end{algorithmic}
\end{algorithm}

\medskip

\textbf{Advantages.}
The PINN framework offers several advantages: it produces smooth approximations of both the solution and the coefficient, handles sparse or noisy data naturally, and is mesh-free. Moreover, it provides a continuous representation of \( u(x,t) \) and \( p(t) \) that can be evaluated at arbitrary points in the domain.

\begin{remark}
The key challenge in PINNs for inverse problems lies in balancing the different terms in the loss function. In our implementation, the penalty parameter \( \lambda \) was chosen empirically to ensure that both the PDE residual and the constraint are well enforced. Additionally, automatic differentiation is used to evaluate all derivatives appearing in the loss terms.
\end{remark}

\section{Numerical Experiments}\label{6}

To validate the proposed methodologies and assess their accuracy and convergence properties, we present a set of numerical experiments. We consider a manufactured solution to the inverse problem \eqref{1.1}--\eqref{1.4}, which allows for direct comparison against known ground truth. Two classical methods--the integration-based approach and the Newton-Raphson-based approach--are compared with the Physics-Informed Neural Network (PINN) strategy. And to assess the robustness of these methods, we tested them using noisy integral data $g(t)$ and its time derivative.

\textbf{Test problem with known solution.}
We consider the inverse problem on the domain \( Q = (0,1) \times (0,1) \), with the exact solution and coefficient given by:
\[
    u(x,t) = e^t \sin(\pi x), \qquad p(t) = e^{-t}.
\]
From this, the source function \( f(x,t) \) and the overdetermination data \( g(t) \) are computed accordingly:
\[
    f(x,t) = \sin(\pi x)(1 + \pi^2 e^t + e^t), \qquad \omega(x) = \sin(\pi x), \qquad g(t) = \frac{1}{2} e^t.
\]

This problem satisfies the assumptions of the theoretical framework and provides a smooth benchmark for verifying accuracy.

\textbf{Error metrics.}
To evaluate the accuracy of the numerical solutions, we use both maximum and $L^2$ norm errors:
\begin{align*}
    \text{Max error in } u(x,t): \quad & Er(u)_{N,M} := \max_{0 \leq i \leq N} |u_i^M - U_i^M|, \\
    \text{Max error in } p(t): \quad & Er(p)_M := \max_{0 \leq k \leq M} |p^k - P^k|, \\
    \text{$L^2$ error in } u(x,t): \quad & E(u)^{(2)}_{N,M} := \sqrt{h \sum_{i=0}^{N} (u_i^M - U_i^M)^2}, \\
    \text{$L^2$ error in } p(t): \quad & E(p)^{(2)}_M := \sqrt{\tau \sum_{k=0}^{M} (p^k - P^k)^2},
\end{align*}
where \( (U_i^M, P^k) \) denote the exact solutions and \( (u_i^M, p^k) \) the numerically computed approximations.

\subsection{Results for the integration-based approach}
Tables~\ref{tab:integration-tau} and~\ref{tab:integration-h} report the accuracy behaviour of the integration-based method.

\begin{table}[H]
\centering
\caption{Integration based approach: varying $\tau$ for fixed $h = 1/100$}
\label{tab:integration-tau}
\begin{tabular}{c|c|c|c|c}
\hline
$\tau$ & $Er(u)$ & $Er(p)$ & $E^{(2)}(u)$ & $E^{(2)}(p)$ \\
\hline
1/200  & 2.76e-3 & 1.05e-2 & 1.95e-3 & 8.76e-3 \\
1/400  & 6.90e-4 & 2.63e-3 & 4.88e-4 & 2.19e-3 \\
1/800  & 1.73e-4 & 6.57e-4 & 1.22e-4 & 5.48e-4 \\
1/1600 & 4.32e-5 & 1.64e-4 & 3.05e-5 & 1.37e-4 \\
\hline
\end{tabular}
\end{table}

\begin{table}[H]
\centering
\caption{Integration based approach: varying $h$ with $\tau = h$}
\label{tab:integration-h}
\begin{tabular}{c|c|c|c|c}
\hline
$h$ & $Er(u)$ & $Er(p)$ & $E^{(2)}(u)$ & $E^{(2)}(p)$ \\
\hline
1/100 & 6.65e-3 & 2.44e-2 & 4.70e-3 & 2.01e-2 \\
1/200 & 1.66e-3 & 1.32e-2 & 1.18e-3 & 1.08e-2 \\
1/400 & 4.15e-4 & 9.03e-3 & 2.95e-4 & 7.35e-3 \\
1/800 & 1.04e-4 & 5.53e-3 & 7.38e-5 & 4.49e-3 \\
\hline
\end{tabular}
\end{table}

\subsection{Results for the Newton--Raphson based approach}
The Newton-Raphson approach exhibits significantly higher accuracy. Tables~\ref{tab:newton-tau} and~\ref{tab:newton-h} show near machine-precision error for $u(x,t)$ and consistently low error in $p(t)$.

\begin{table}[H]
\centering
\caption{Newton--Raphson based approach: varying $\tau$ for fixed $h = 1/100$}
\label{tab:newton-tau}
\begin{tabular}{c|c|c|c|c}
\hline
$\tau$ & $Er(u)$ & $Er(p)$ & $E^{(2)}(u)$ & $E^{(2)}(p)$ \\
\hline
1/200  & 2.09e-10 & 3.31e-3 & 1.48e-10 & 3.31e-3 \\
1/400  & 4.26e-11 & 2.48e-3 & 3.01e-11 & 2.48e-3 \\
1/800  & 5.67e-12 & 1.81e-3 & 4.01e-12 & 1.81e-3 \\
1/1600 & 3.64e-13 & 8.11e-4 & 2.59e-13 & 1.31e-4 \\
\hline
\end{tabular}
\end{table}

\begin{table}[H]
\centering
\caption{Newton--Raphson based approach: varying $h$ with $\tau = h$}
\label{tab:newton-h}
\begin{tabular}{c|c|c|c|c}
\hline
$h$ & $Er(u)$ & $Er(p)$ & $E^{(2)}(u)$ & $E^{(2)}(p)$ \\
\hline
1/100 & 3.06e-9 & 5.80e-3 & 2.16e-9 & 5.80e-3 \\
1/200 & 2.09e-10 & 2.70e-3 & 1.48e-10 & 2.70e-3 \\
1/400 & 4.25e-11 & 1.76e-3 & 3.01e-11 & 1.76e-3 \\
1/800 & 6.17e-12 & 1.03e-3 & 4.21e-12 & 1.03e-3 \\
\hline
\end{tabular}
\end{table}

\subsection{Results for the PINN method}
The PINN approach was trained using 1000 interior collocation points. Table~\ref{tab:comparison} presents a comparative summary of all three methods.

\begin{table}[H]
\centering
\caption{Comparison of the three methods ($h = \tau = 0.01$, $N_f = 1000$ for PINN)}
\label{tab:comparison}
\begin{tabular}{l|c|c|c|c}
\hline
Approach & $Er(u)$ & $Er(p)$ & $E^{(2)}(u)$ & $E^{(2)}(p)$ \\
\hline
Integration     & 6.65e-3 & 2.44e-2 & 4.70e-3 & 2.01e-2 \\
Newton--Raphson & 3.06e-9 & 5.80e-3 & 2.16e-9 & 5.80e-3 \\
PINN (1000 pts) & 1.78e-4 & 1.29e-2 & 8.45e-5 & 3.69e-3 \\
\hline
\end{tabular}
\end{table}

\subsection{Discussion}
The results clearly show that the Newton--Raphson method delivers the highest precision for both state and parameter recovery. The integration method is simpler and robust, but its accuracy is limited due to numerical differentiation of \( g(t) \). The PINN approach offers a flexible and mesh-free alternative, producing continuous approximations and performing reasonably well with a limited number of training points.

\begin{remark}
In all cases, the convergence behavior of the schemes aligns with the theoretical order of accuracy, and the residual errors confirm both stability and robustness of the proposed methods.
\end{remark}

\subsection{Graphical comparisons and interpretation}

Figure~\ref{fig1} compares the analytical solution and the numerical results for \( u(x,t) \) and \( p(t) \) obtained using \textbf{Integration-based approach} on a uniform grid with \( N = M = 200 \).

\begin{figure}[H]
\centering
\includegraphics[width=0.7\textwidth]{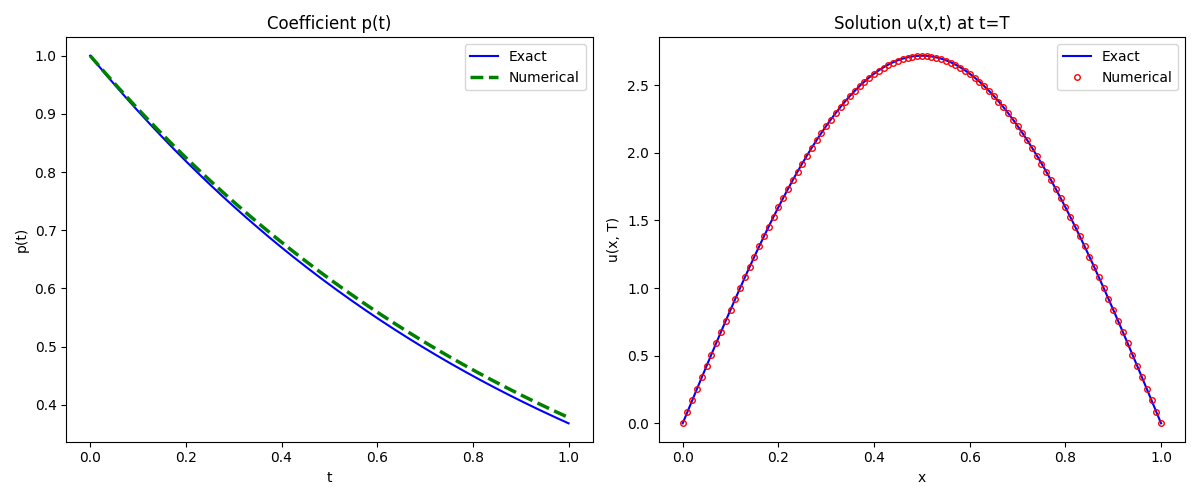}
\caption{Analytical vs numerical solutions of \( p(t) \) and \( u(x, t) \) at \( T=1 \) using the Integration-based approach.}
\label{fig1}
\end{figure}

Figure~\ref{fig2} presents the corresponding results for \textbf{Newton--Raphson-based approach}.

\begin{figure}[H]
\centering
\includegraphics[width=0.7\textwidth]{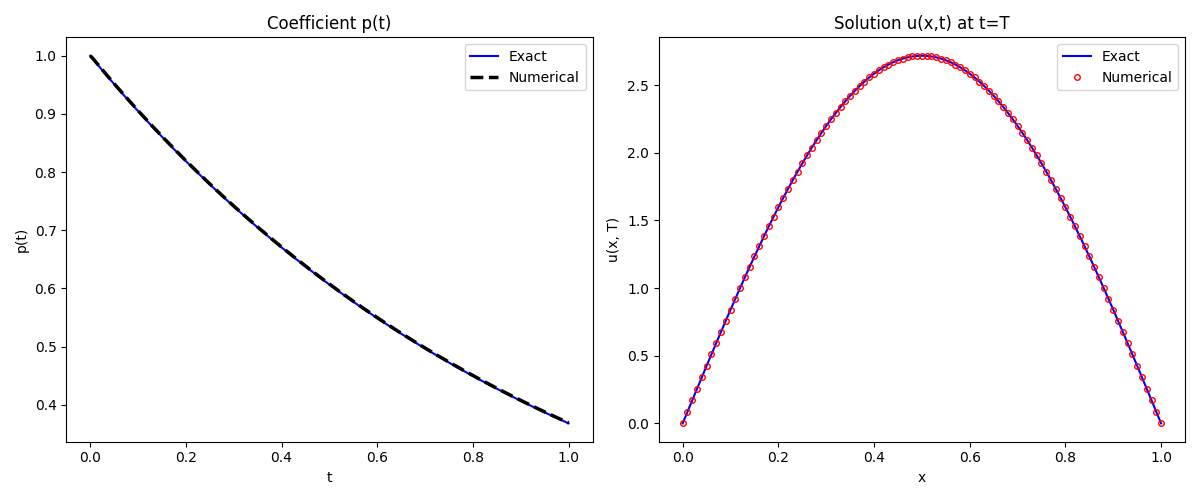}
\caption{Analytical vs numerical solutions of \( p(t) \) and \( u(x, t) \) at \( T=1 \) using the Newton--Raphson-based approach.}
\label{fig2}
\end{figure}

Figure~\ref{fig3} shows the comparison for \textbf{PINN approach} under the same discretisation.

\begin{figure}[H]
\centering
\includegraphics[width=0.7\textwidth]{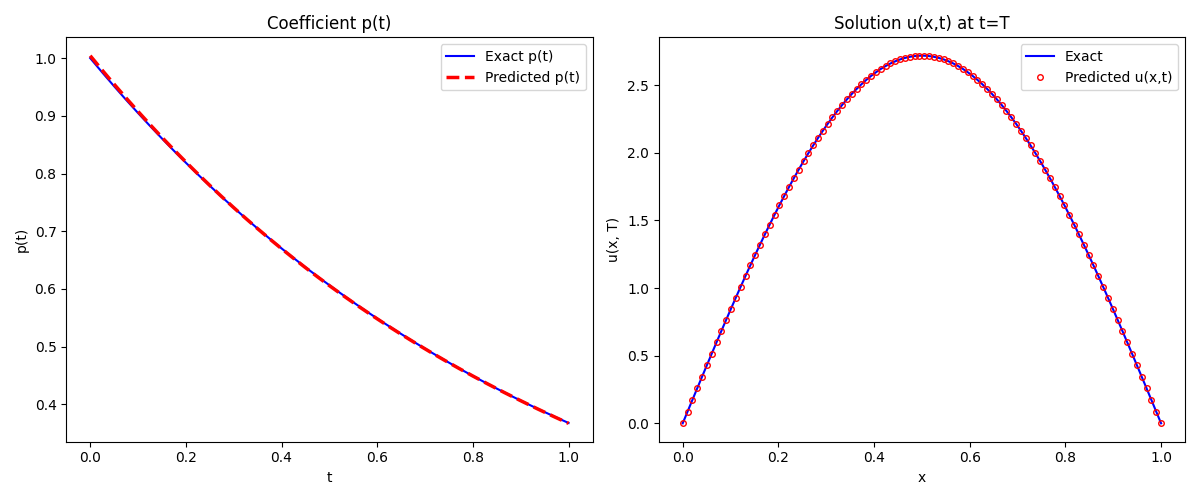}
\caption{Analytical vs numerical solutions of \( p(t) \) and \( u(x, t) \) at \( T=1 \) using PINN approach.}
\label{fig3}
\end{figure}

A direct comparison of the reconstructed coefficient \( p(t) \) across all three approaches is shown in Figure~\ref{fig4}, highlighting their relative accuracy.

\begin{figure}[H]
\centering
\includegraphics[width=0.8\textwidth]{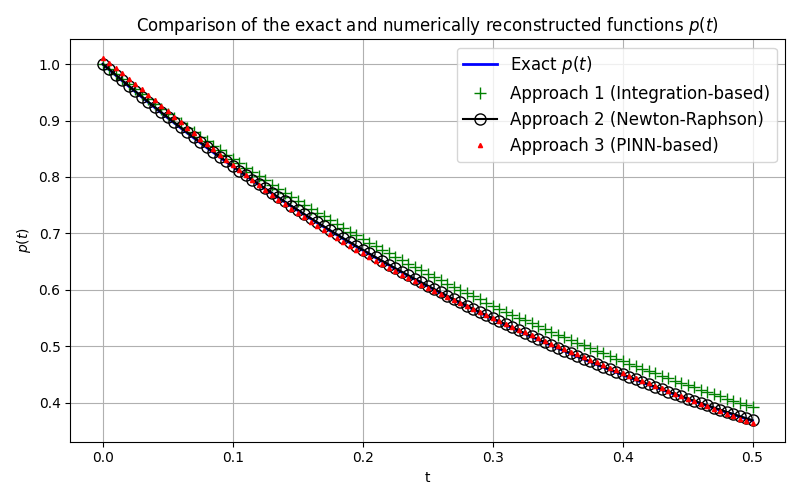}
\caption{Comparison of reconstructed \( p(t) \) from all three approaches against the exact solution.}
\label{fig4}
\end{figure}

From Figures~\ref{fig1}--\ref{fig3}, it is evident that \textbf{Newton--Raphson approach} achieves superior accuracy in both state and coefficient reconstruction. Notably, it retains high fidelity even when the spatial step is coarser than the temporal step. Precision can be further improved with a stricter convergence tolerance in the iteration.

Figure~\ref{fig5} displays 2D plots of the exact and computed \( u(x,t) \) using the Newton--Raphson method on a grid with \( N=M=200 \).

\begin{figure}[H]
\centering
\includegraphics[width=0.7\textwidth]{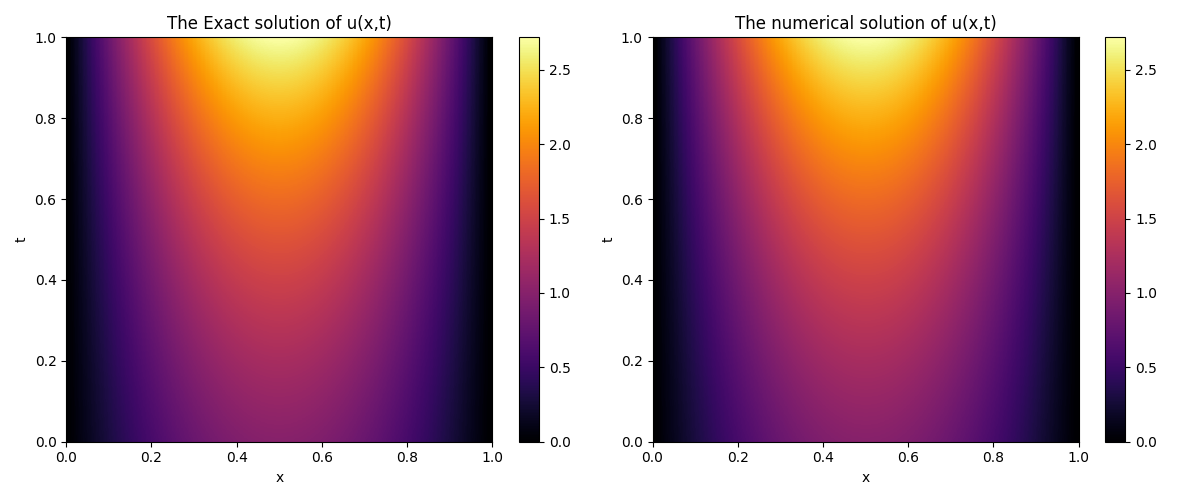}
\caption{Exact (left) and numerical (right) solutions of \( u(x,t) \) using the Newton-Raphson-based approach.}
\label{fig5}
\end{figure}

These visual results confirm the stability and effectiveness of the proposed schemes. The Newton-Raphson based approach demonstrates excellent performance, combining rapid convergence and high precision. The integration method is computationally simple but less accurate, particularly in estimating \( p(t) \). The PINN method offers a flexible and mesh-free framework, yielding continuous approximations, but with higher training cost and sensitivity to network architecture and hyperparameters.

\subsection{Sensitivity of the numerical methods to noisy data}
To test our methods under realistic conditions, we conducted experiments in which both the overdetermination data \(g(t)\) and its time derivative were corrupted by additive noise. We defined
\[
g^{\delta}(t) = g(t) + \delta\,\eta(t),
\qquad
\partial_t g^{\delta}(t) = \partial_t g(t) + \delta\,\xi(t),
\]
with \(\delta \in \{0.01,0.03,0.05\}\) (corresponding to 1 \%, 3 \%, and 5 \% noise levels) and \(\eta(t)\), \(\xi(t)\) as random perturbations simulating measurement errors. The noisy signals \(g^{\delta}(t)\) and \(\partial_t g^{\delta}(t)\) were then used in place of the exact values in the coefficient‐recovery formula.

\textbf{Testing the Integration-based approach on noisy data.}
The results are presented in Figures~\ref{noise1}, where each plot compares the exact and numerically reconstructed values of \( p(t) \) under different noise levels.
\begin{figure}[h]
       \centering
        \includegraphics[width=0.6 \textwidth]{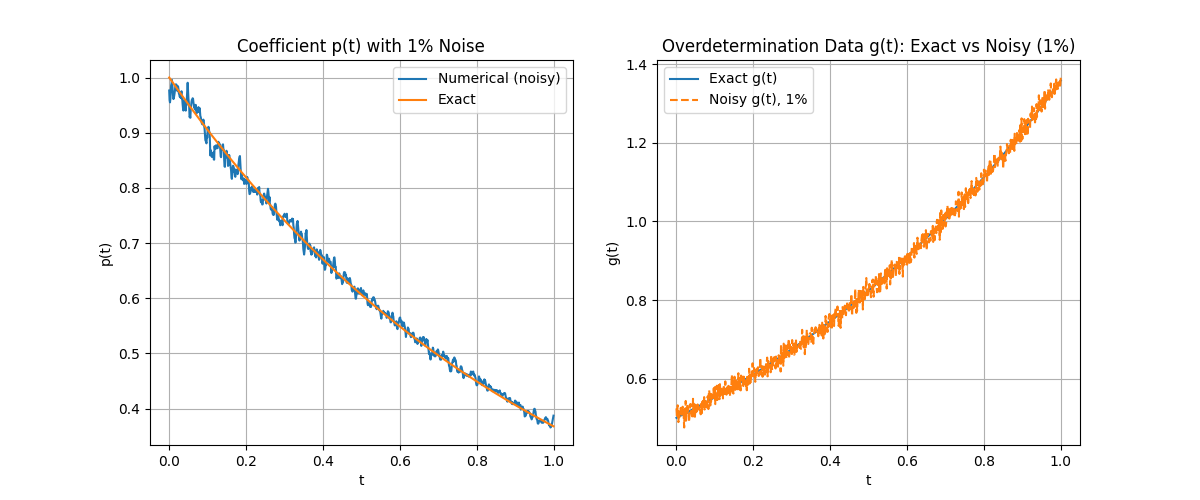}
    \vspace{0.3cm} 
        \centering
        \includegraphics[width=0.6 \textwidth]{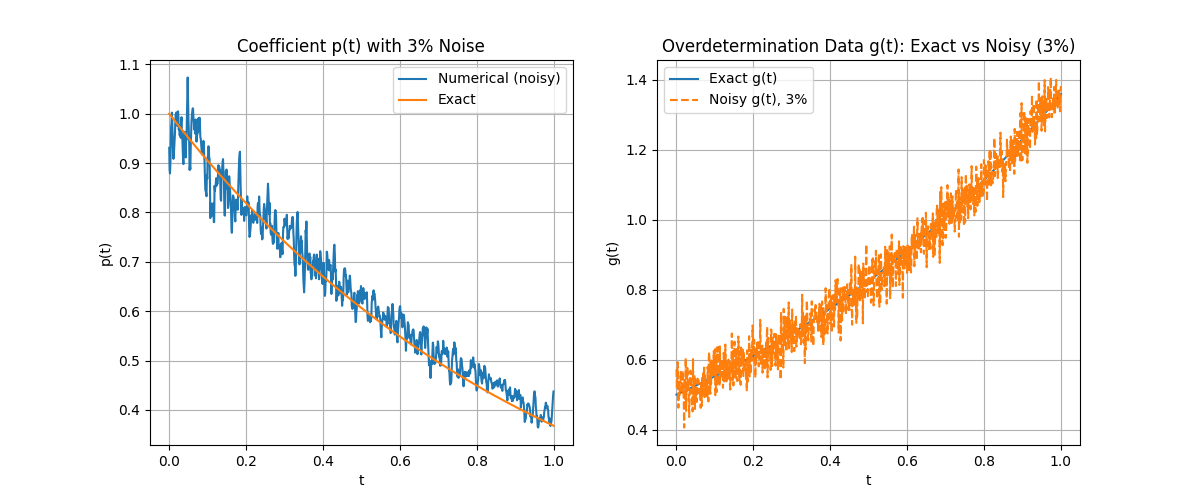}
     \vspace{0.3cm}
        \centering
        \includegraphics[width=0.6 \textwidth]{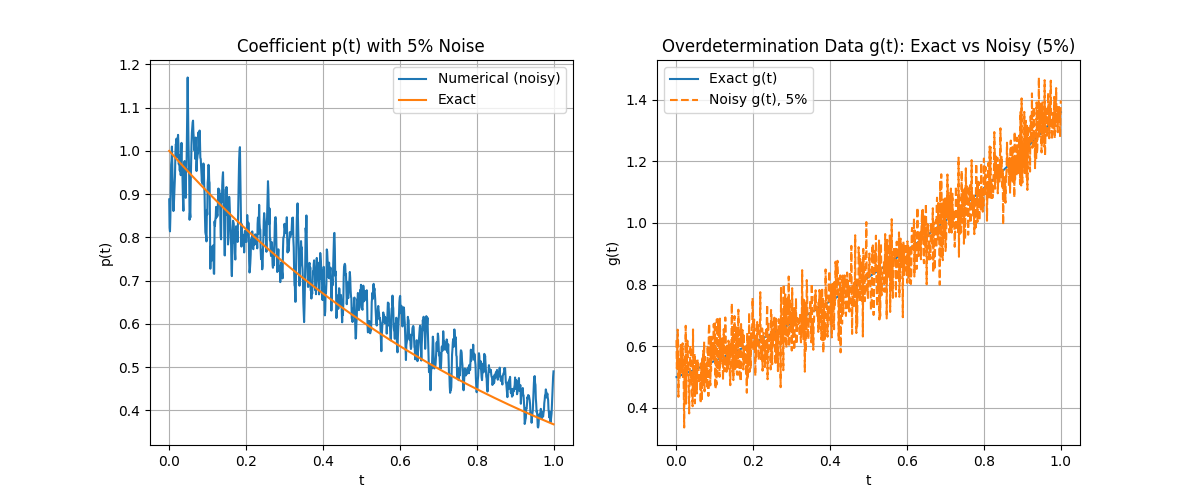}
    \caption{Left: Exact and numerically reconstructed \(p(t)\) via the integration-based method under 1 \%, 3 \%, and 5 \% noise in the overdetermination data. Right: Overdetermination data \(g(t)\) showing the exact signal (blue) and its noisy observations (orange).}
    \label{noise1}
\end{figure}
The results demonstrate the ability of the integration-based approach to efficiently handle noisy data, making it suitable for practical applications where accurate measurements are rarely available.

\textbf{Testing the Newton–Raphson-based approach on noisy data.}
Figure \ref{noise2} presents the comparison results of the exact and numerically reconstructed values of \( p(t) \) under 1\% noise level obtained using the Newton–Raphson–based approach.
\begin{figure}[h]
       \centering
        \includegraphics[width=0.55 \textwidth]{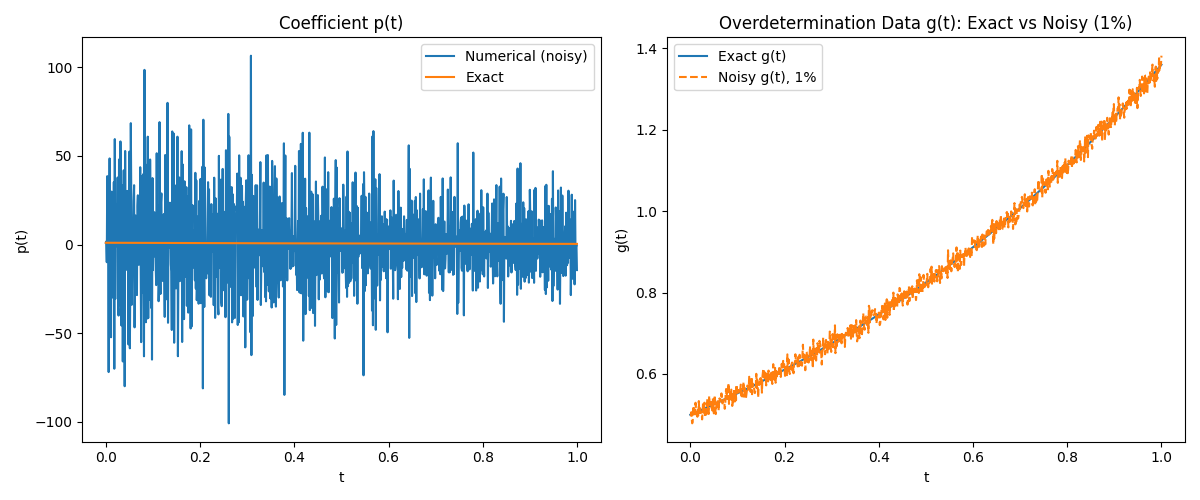}
\caption{Left: Exact and numerically reconstructed \(p(t)\) via the Newton–Raphson-based approach under 1 \% noise in the overdetermination data. Right: Overdetermination data \(g(t)\) showing the exact signal (blue) and its noisy observations (orange).}
\label{noise2}
\end{figure}

From the figure \ref{noise2}, the Newton–Raphson approach proves unstable even at 1\% noise, reflecting the \emph{ill-posed} nature of the inverse problem: tiny data errors cause large parameter deviations. Despite Savitzky–Golay filtering and Tikhonov regularisation, improvements are marginal, underscoring the method’s limited robustness and the need for more noise-tolerant inversion strategies.

\textbf{Testing the PINN approach on noisy data}
Figure \ref{noise3} shows the comparison of the exact and predicted values of \( p(t) \) under 1\%, 3\%, 5\% noise levels.
\begin{figure}[h]
       \centering
        \includegraphics[width=0.5 \textwidth]{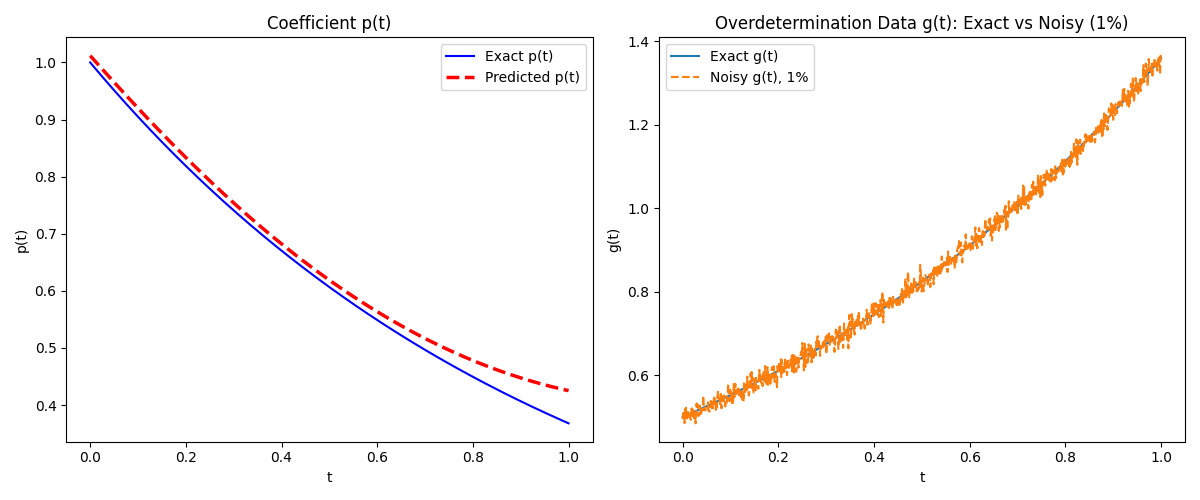}
    \vspace{0.3cm} 
        \centering
        \includegraphics[width=0.5 \textwidth]{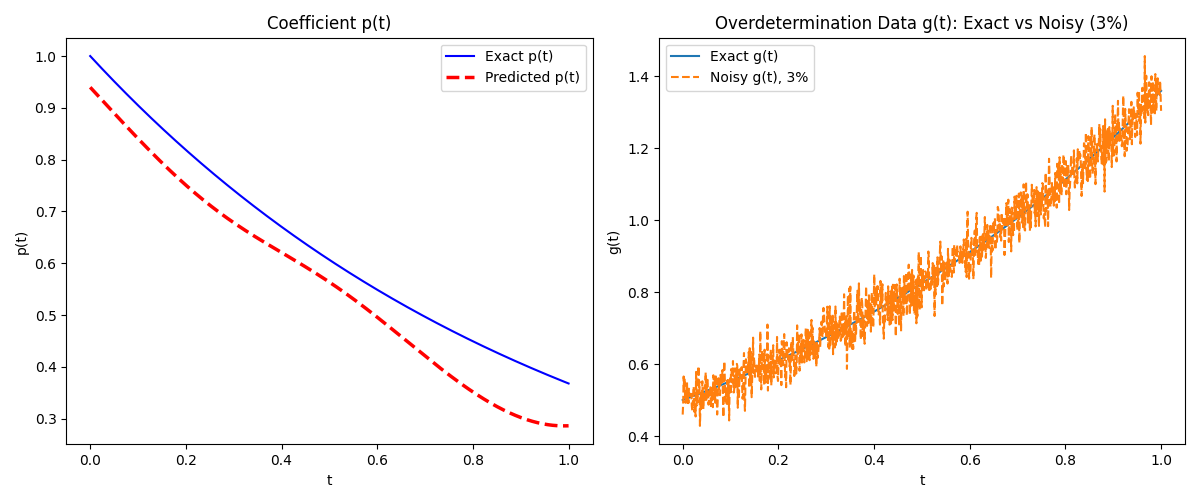}
     \vspace{0.3cm}
        \centering
        \includegraphics[width=0.5 \textwidth]{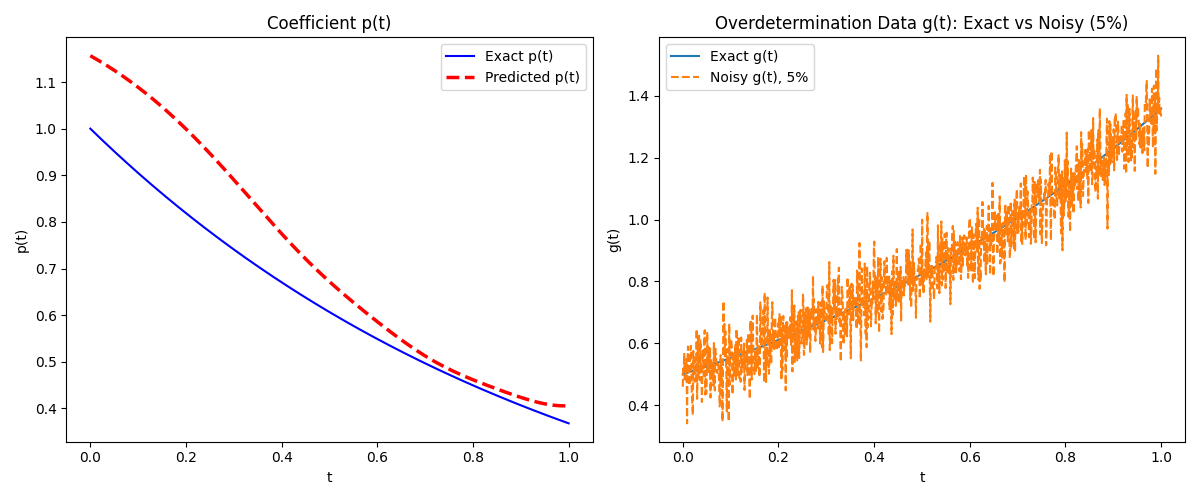}
    \caption{Left: Exact and numerically reconstructed \(p(t)\) via the PINN method under 1 \%, 3 \%, and 5 \% noise in the overdetermination data. Right: Overdetermination data \(g(t)\) showing the exact signal (blue) and its noisy observations (orange).}
    \label{noise3}
\end{figure}

From the figure \ref{noise3}, it is clear that the PINN approach remains stable even with 1\%–5\% noise and yields significantly smoother reconstructions than the integration-based method. By embedding the diffusion equation directly into the loss function, PINNs automatically suppress noise-induced oscillations and enforce physical consistency. As a result, the recovered potential $p(t)$ not only matches the noisy measurements but also adheres closely to the true dynamics, delivering accurate, smooth solutions across all tested noise levels.

\section{Conclusion}

We studied the inverse problem of identifying a time‐dependent potential \(p(t)\) and the state \(u(x,t)\) in a 1D diffusion equation with Dirichlet boundary conditions and an integral overdetermination constraint. After establishing existence and uniqueness via Schauder’s theorem, we implemented three numerical approaches:

\begin{itemize}
  \item \textbf{Integration-based:} Simple to implement, stable under 1–5 \% noise, but sensitive to grid resolution and derivative approximation.
  \item \textbf{Newton–Raphson:} Fastest and most accurate on noise-free data, yet unstable even at 1 \% noise.
  \item \textbf{PINNs:} Mesh-free and physics-regularised, providing smooth, noise-robust reconstructions (up to 5 \% noise) at the cost of longer training and careful hyperparameter tuning.
\end{itemize}

Overall, classical methods remain highly efficient for low-dimensional, low-noise settings, while PINNs offer a flexible, robust alternative for noisy or higher-dimensional problems. Future work will explore hybrid schemes that combine the speed and accuracy of classical solvers with the noise resilience and mesh independence of PINNs.

\section*{Funding}
\noindent
The research is financially supported by a grant from the
Ministry of Science and Higher Education of the Republic of Kazakhstan (No. AP27508473), by the FWO Research Grant G083525N: Evolutionary partial differential equations with strong singularities, and by the Methusalem programme of the Ghent University Special Research Fund (BOF) (Grant number 01M01021).


\end{document}